\documentclass[11pt]{article}
\usepackage[T1]{fontenc}
\usepackage[utf8]{inputenc}
\usepackage{fullpage}
\usepackage{amsmath, amsthm, amsfonts, amssymb, amstext, mathrsfs, enumerate}
\usepackage{graphicx, ragged2e, lscape, framed, xcolor}
\usepackage{subfiles}
\usepackage{comment}

\theoremstyle{plain}
\newtheorem{theorem}{Theorem}[section]
\newtheorem{lemma}[theorem]{Lemma}
\newtheorem{proposition}[theorem]{Proposition}
\newtheorem{conjecture}[theorem]{Conjecture}
\newtheorem{corollary}[theorem]{Corollary}

\newtheorem{claim}{Claim}[section]
\newtheorem*{remark}{Remark}
\newtheorem{example}{Example}[section]

\numberwithin{equation}{section}
\allowdisplaybreaks

\newcommand{\affl}[3]{\noindent #1, Email: {\tt #2}\\ \textsc{#3}\\[1.5pt]}

\usepackage[pagebackref]{hyperref}
\hypersetup{
	colorlinks=true,
    urlcolor=purple,
	linkcolor=purple,
    citecolor=purple,
}

\DeclareMathOperator{\tr}{tr}
\DeclareMathOperator{\diam}{diam}

\DeclareMathOperator{\dist}{dist}

\DeclareMathOperator{\col}{col}
\newcommand{\av}[1]{{#1}_{\text{av}}}

\def\e{\mbox{\boldmath $e$}}

\def\uvec{\mbox{\boldmath $u$}}
\def\vvec{\mbox{\boldmath $v$}}

\def\x{\mbox{\boldmath $x$}}
\def\z{\mbox{\boldmath $z$}}
\def\1{\mbox{\boldmath $1$}}

\title{\textbf{Clique and chromatic numbers of graph powers: \\spectral bounds and anticodes}}

\author{Aida Abiad,\, Gabriel Coutinho,\, Hitesh Kumar,\, Emanuel Juliano,\, Michael Tait}
\date{}

\begin{document}
\maketitle
\begin{abstract}
Using spectral and polynomial methods, we show several sharp spectral bounds for the clique and chromatic numbers of graph powers that only use spectral information from the original graph. In particular, we strengthen earlier spectral-radius estimates to obtain an asymptotically sharp colouring bound for general graph powers, and prove a sharp bound for powers of line graphs with a complete characterisation of equality. We also develop polynomial methods yielding upper and lower 
bounds for clique numbers of graph powers. Regarding the connection between the clique and coloring parameters of graph powers, we propose a spectral analogue of Wegner’s conjecture, determine the extremal values for graph squares, and establish the conjecture for all powers when the spectral-radius bound is sufficiently small. Finally, we connect our spectral polynomial method with coding theory: we prove that for distance-regular graphs, optimisation over the adjacency algebra recovers Delsarte's linear programming bound for anticodes.
\end{abstract}

\noindent
\textbf{Keywords:} distance-$k$ clique number; distance-$k$ chromatic number; graph power; eigenvalues; Delsarte LP; anticode

\noindent
\textbf{MSC2020:} 05C50; 05C12; 05C15; 05C69; 94B05

\section{Introduction}
Let $G=(V,E)$ be a finite simple graph and let $k$ be a positive integer. The \emph{$k$-th power} $G^k$ has vertex set $V$, with two distinct vertices adjacent if and only if their distance in $G$ is at most $k$. A \emph{distance-$k$ colouring} of $G$ assigns different colours to vertices at distance at most $k$, whereas a \emph{distance-$k$ clique} is a set of vertices whose pairwise distances are at most $k$. Their associated parameters satisfy
\[
    \chi_k(G)=\chi(G^k),\qquad
    \omega_k(G)=\omega(G^k),\qquad
    \omega_k(G)\leq\chi_k(G).
\]
Thus, graph powers turn distance-constrained colouring and packing questions into questions about familiar graph invariants.
These parameters have both combinatorial and metric interpretations. In a network of processors, $G^k$ records which pairs of processors can communicate within $k$ rounds, so a clique in $G^k$ is a set of processors that are pairwise reachable within that time; see~\cite{Linial_1992} for the role of locality in distributed graph algorithms. A distance-$k$ clique is also an anticode of diameter at most $k$ in the graph metric, connecting the problem to coding theory and code--anticode bounds; see, for example,~\cite{Shi_Xia_Krotov_2023}. More broadly, distance constraints underlie a variety of packing and optimisation problems, while every distance-$k$ clique is an immediate obstruction to using fewer colours in a distance-$k$ colouring.

Powers of line graphs provide the corresponding edge-distance framework. If $L(G)$ denotes the line graph of $G$, then a colouring of $L(G)^k$ assigns different colours to edges at distance at most $k$ in $L(G)$. In particular, $\chi_2(L(G))$ is the \emph{strong chromatic index} of $G$, and $\omega_2(L(G))$ is its \emph{strong clique index}. The chromatic and clique numbers of graph and line graph powers have received substantial attention; see the survey~\cite{Cranston_2023}, the graph-power colouring result~\cite{Pierron_2024}, and recent work on distance-edge colourings, strong cliques, strong edge-colourings, and the degree--diameter problem~\cite{Abiad_Reijnders_2026,Kumar_Mohar_Pragada_2026,Davey_Hurley_Verclos_Kang_Volec_2026,vanBatenburg_Korsky_2026}. Two central conjectures are Wegner's conjecture comparing the extremal clique and chromatic numbers of graph powers under a maximum-degree constraint (see Conjecture~\ref{conj:Wegner}), and the Erd\H{o}s--Ne\v{s}et\v{r}il conjecture on strong edge-colouring; see~\cite[Sections~2 and~4]{Cranston_2023}. 

Spectral, algorithmic, algebraic, and probabilistic methods have also been developed for other aspects of graph powering. These include bounds for the independence number~\cite{Atkinson_Frieze_2004,Abiad_Cioaba_Tait_2016,Abiad_Coutinho_Fiol_2019,Abiad_Zhou_2026} and the chromatic number~\cite{Alon_Mohar_2002,Kierstead_Yang_Yi_2020,Abiad_Coutinho_Fiol_Nogueira_Zeijlemaker_2022, Abiad_Zhou_2026}, as well as optimisation and complexity theory questions~\cite{Agnarsson_Halldorsson_2003,Molloy_Salavatipour_2005,Abiad_Hoesel_Michaux_2026,Chalermsook_Laekhanukit_Nanongkai_2014}. Independence and colouring problems in powers of random graphs have likewise been studied~\cite{Atkinson_Frieze_2004,Garapaty_Lokshtanov_Maji_Pothen_2023,Frieze_Kang_Raut_Sweering_Verbeek_2026}.

From the spectral graph theoretic perspective, bounds on the independence and chromatic number of graph powers have been applied to estimate the Shannon capacity~\cite{Fiol_2020, Abiad_Dalfo_Fiol_2026}, the largest size of codes under certain metrics \cite{APR2025,AbiadNeriReijnders2025}, edge-isoperimetric numbers~\cite{Abiad_vandeBerg_Juliano_Reijnders_Simoens_vanVeluw_Wittebol_2026}, and quantum analogues of the distance chromatic and independence numbers~\cite{Abiad_Jany_2025,Abiad_Meeus_2025,Wocjan_Elphick_Abiad_2022}. 

A main difficulty is that the spectrum of $G$ does not generally determine the adjacency matrix of $G^k$ through a polynomial identity. Matrix powers $A(G)^j$ count walks, whereas adjacency in $G^k$ records whether a sufficiently short path exists. Consequently, applying a classical spectral bound directly to $G^k$ need not produce a bound in terms of the spectral data of $G$; see~\cite[Sections~2 and~3]{Abiad_Coutinho_Fiol_Nogueira_Zeijlemaker_2022} or \cite{Das_Guo_2013, Xing_Zhou_2017}. Nor can spectral bounds for distance independence simply be transferred to distance cliques by complementation: in general, $(\overline{G})^k\neq\overline{G^k}$, and hence $\alpha_k(G)$ need not equal $\omega_k(\overline{G})$. These structural obstacles accompany computational ones: even at $k=1$, clique number and chromatic number are NP-hard to compute, and graph-power colouring has substantial hardness-of-approximation barriers~\cite{Chalermsook_Laekhanukit_Nanongkai_2014}.

In this paper, we derive multiple bounds for $\chi_k(G)$ and $\omega_k(G)$ from spectral information about the original graph, without requiring the spectrum of its power. We use two complementary features of adjacency polynomials. For upper bounds, suitable walk-counting polynomials dominate the adjacency matrix of the power entrywise. For lower bounds, a polynomial of bounded degree has columns supported on small metric balls. The first feature yields spectral-radius and expander-mixing estimates; the second relates clique size to quadratic forms and spectral moments.

We begin in Section~\ref{sec:spectral_radius} by improving estimates for $\lambda_1(G^k)$ and $\lambda_1(L(G)^k)$. Combined with Wilf's inequality, these give the colouring bounds of Section~\ref{sec:wilfspectrarelated}. For general graph powers, Theorem~\ref{thm:wilf_graph_power_improved} strengthens the earlier spectral-radius estimate by an explicit term involving the minimum degree; its leading term is asymptotically sharp in view of the constructions in~\cite{vanBatenburg_Korsky_2026}. For line graph powers, Theorem~\ref{thm:wilf_line_graph power} gives a bound in terms of the largest signless Laplacian eigenvalue of $G$, together with a complete characterisation of equality. These colouring estimates also extend to the corresponding list-colouring and painting parameters.

For regular graphs, we then use the nonprincipal eigenvalues to bound clique size through a one-sided polynomial expander-mixing argument (Theorem~\ref{thm:EML_approach}). Non-backtracking walk polynomials give explicit choices in this bound, and we show that the bound is sharp for even cycles and incidence graphs of projective planes, and also for the squares of line graphs of subdivisions of complete graphs, where a different dominating polynomial is used. These examples illustrate the strength of the same polynomial expander-mixing bound for both graph and line graph powers.

In Section~\ref{sec:lower_bounds}, we obtain lower bounds for $\omega_k(G)$, and therefore also for $\chi_k(G)$, from local balls and spectral moments. Theorem~\ref{thm:general_lower_bound} applies to arbitrary graphs and real polynomials of suitable degree. It requires neither regularity nor a polynomial identity for $A(G^k)$. Choosing the all-ones vector or a scaled Perron vector gives explicit consequences involving the average degree, the spectral radius, and the concentration of the Perron vector.

In Section~\ref{sec:Wegner} we propose a spectral analogue of Wegner's extremal conjecture. The constraint now concerns the spectral radius rather than the maximum degree. We determine the extremal clique and chromatic numbers for $k=1,2$. We also prove their equality for all powers throughout the range $1\leq r\leq\sqrt{2+\sqrt{5}}$. Our results show that the extremal behaviour under a spectral-radius constraint can differ from that under a maximum-degree constraint.

We conclude in Section~\ref{sec:coding} with a coding-theoretic interpretation of our spectral results. For distance-regular graphs, optimizing the polynomial expander-mixing bound over the full adjacency algebra recovers exactly Delsarte's linear-programming bound (LP) for anticodes. This gives an equally strong spectral formulation, while the spectral bound itself also applies without the need to use an association scheme. We illustrate this connection with an application of the spectral method to nested linear codes and a sharp BCH coset-graphs.

\section{Notation and background}
Throughout this paper, we use standard graph terminology and notation. Let $G=(V(G), E(G))$ be a simple undirected graph of order $n = |V(G)|$ and size $m = |E(G)|$. Let $A(G)$ denote the \emph{adjacency matrix} of $G$ with eigenvalues enumerated as $\lambda_{1}(G)\geq \cdots \geq \lambda_{n}(G)$. If $G$ is clear from the context, we will omit $G$ and simply write $\lambda_i$ to mean $\lambda_i(G)$. 

Throughout, $\omega(G)$, $\alpha(G)$ and $\chi(G)$ will denote the \emph{clique number}, \emph{independence number} and the \emph{chromatic number} of $G$. For a vertex $u\in V(G)$, we denote its \emph{degree} in $G$ by $\deg_G(u)$ (or simply $\deg(u)$ if $G$ is clear from context). We denote the \emph{maximum degree}, \emph{minimum degree}, \emph{diameter} and \emph{girth} of $G$ by $\Delta(G)$, $\delta(G)$, $\diam(G)$ and $g(G)$, respectively. The distance between $u,v\in V(G)$ in $G$ is denoted by $d_G(u,v)$. We write $u\sim v$ if $u$ and $v$ are adjacent and $u\nsim v$ otherwise. For a subset $S\subseteq V(G)$, we denote by $G[S]$ the subgraph of $G$ \emph{induced} by the vertices in $S$. A \emph{clique} $C$ in $G$ is a subset of $V(G)$ such that $G[C]$ is a complete graph. We denote the \emph{line graph} of $G$ by $L(G)$. 

For a positive integer $k$, the \emph{k-th graph power} of $G$, denoted by $G^k$, is the graph which has the same vertex set as $G$, with two vertices adjacent in $G^k$ if and only if they are at distance at most $k$ in $G$. For a vertex $u\in V(G)$, we define the \emph{$k$-ball centred at $u$} to be the set \[ B_k(u): = \{v\in V(G): \dist(u,v)\le k\}.\] Equivalently, $B_k(u)$ is the closed neighbourhood of $u$ in $G^k$. A \emph{distance-k clique} of $G$ is a clique in $G^k$. We define \emph{distance-$k$ clique number} $\omega_k(G)$ to be the clique number of $G^k$, i.e., $\omega_k(G):=\omega(G^k)$. The \emph{distance-$k$ independence number} $\alpha_k(G)$ and the \emph{distance-$k$-chromatic number} $\chi_k(G)$ are defined analogously. 

We denote by $\mathbb{R}[x]$ (resp. $\mathbb{R}_k[x]$) the set of polynomials (resp. with degree at most $k$) having real coefficients. Let $A$ and $B$ be square matrices of the same order. We write $A\le B$ whenever $A$ is entry-wise less than or equal to $B$. We denote by $I$, $J$ and $\mathbf{1}$ the identity matrix, the all-ones matrix and the all-ones vector of appropriate order. 

\section{Spectral radius of graph powers}\label{sec:spectral_radius}

The spectral radius of a graph almost universally appears in any spectral estimation of graph parameters. Hence, it is only natural to investigate the spectral radius of graph powers first.

To apply Wilf's inequality to $G^k$, we first need to control $\lambda_1(G^k)$ using information about $G$. Adjacency powers count walks, so shortest paths give entrywise comparisons between $A(G^k)$ and suitable expressions in $A(G)$. The useful comparisons also remove contributions from immediate returns and repeated encounters with nearby vertices. We begin with the inequalities of Miao and Fan \cite{Miao_Fan_2014}, and then obtain a quantitative improvement for higher powers. 

For a graph $G$, let $A(G)$, $D(G)$ and $\mathcal{L}(G) := D(G) - A(G)$ denote its \emph{adjacency}, \emph{degree} and the \emph{Laplacian} matrix. The following result compares the adjacency matrix of powers of $G$ to the adjacency matrix of $G$.  

\begin{lemma}[\cite{Miao_Fan_2014}]\label{lemma:adjacency_graph_powers} Let $G$ be a connected graph.
\begin{enumerate}
    \item Then $A(G^2)\le A(G)^2 - \mathcal{L}(G)$ with equality if and only if girth $g(G)\ge 5$. 
    \item If $k\ge 3$, then 
    \begin{align*}
    A(G^k) & \le A(G^{k-1})A(G) - A(G)(D(G)-I) - \mathcal{L}(G),\\
    A(G^k) & \le A(G)A(G^{k-1}) - (D(G)-I)A(G) - \mathcal{L}(G).
    \end{align*}
\end{enumerate}
\end{lemma}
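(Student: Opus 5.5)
The plan is an entrywise comparison, done pair by pair. For each ordered pair $(u,v)$ we compare the $(u,v)$ entry of both sides by counting walks.

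\textbf{Part 1.} Write $A=A(G)$ and $D=D(G)$, so that $A^2-\mathcal{L}(G)=A^2-D+A$. We compute this entry by entry.
\begin{itemize}
\item \emph{Diagonal:} $(A^2)_{uu}=\deg(u)=D_{uu}$, so the right-hand side has $0$ on the diagonal, matching $A(G^2)_{uu}=0$.
\item \emph{Adjacent pair $u\sim v$:} the right-hand side equals $|N(u)\cap N(v)|+1\ge 1=A(G^2)_{uv}$. Equality holds iff $u,v$ have no common neighbour, i.e.\ the edge $uv$ lies in no triangle.
\item \emph{Distance-$2$ pair:} the entry is $|N(u)\cap N(v)|\ge1$. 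Equality holds iff $u,v$ have exactly one common neighbour, i.e.\ no $4$-cycle passes through $u$ and $v$ as opposite vertices.
\item \emph{Distance $\ge 3$:} both sides are $0$.
\end{itemize}
This gives the inequality. Equality everywhere means there are no triangles and no $4$-cycles, i.e.\ $g(G)\ge5$. Conversely, if $g\ge5$ each case above is an equality. Connectedness is not really needed here beyond convention.

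\textbf{Part 2.} Put $M=A(G^{k-1})A-A(D-I)-\mathcal{L}(G)$. Then
\[
M_{uv}=\#\{w: d(u,w)\le k-1,\ w\sim v\}-\deg(u)\,\mathbf{1}[u\sim v]+\mathbf{1}[u\sim v]-\deg(u)\,\mathbf{1}[u=v].
\]
The key combinatorial point is the count $c_{uv}:=|\{w\in N(v): 1\le d(u,w)\le k-1\}|$. Note that $A(G^{k-1})$ has zero diagonal, so $w=u$ is excluded from this count.

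\begin{itemize}
\item \emph{Diagonal $u=v$:} every neighbour $w$ of $u$ has $d(u,w)=1\le k-1$, so $c_{uu}=\deg(u)$ and $M_{uu}=0$.
\item \emph{$u\sim v$:} we need $c_{uv}\ge \deg(u)$, not just $\ge 1$, and this is the main obstacle. Here $c_{uv}$ counts neighbours $w\neq u$ of $v$ (all within distance $2\le k-1$ of $u$), so it equals $\deg(v)-1$. Thus $M_{uv}=\deg(v)-1-\deg(u)+1=\deg(v)-\deg(u)$, which can be negative. So the literal entrywise inequality fails in this form, and I would revisit it.
\end{itemize}

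The fix is to allocate the correction terms correctly. In $A(G^{k-1})A$ the $(u,v)$ entry with $u\sim v$ counts the $\deg(v)-1$ neighbours $w\ne u$ of $v$, so the natural subtraction is $(A(D-I))_{uv}=\deg(v)-1$, using the right-multiplication convention $(A(D-I))_{uv}=A_{uv}(\deg(v)-1)$. With that convention the entry is $\deg(v)-1-(\deg(v)-1)+1=1$, exactly as needed, and I read the lemma this way. Symmetrically, for the second inequality $((D-I)A)_{uv}=(\deg(u)-1)A_{uv}$, and $AA(G^{k-1})$ counts the neighbours $w\ne v$ of $u$.

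With this reading the remaining cases go as follows.
\begin{itemize}
\item \emph{$2\le d(u,v)\le k$:} the correction terms vanish. Take a neighbour $w$ of $v$ on a shortest $u$–$v$ path; then $1\le d(u,w)=d(u,v)-1\le k-1$, so $M_{uv}\ge1$.
\item \emph{$d(u,v)>k$:} every neighbour $w$ of $v$ has $d(u,w)\ge k$, so $M_{uv}=0$.
\end{itemize}
Hence $A(G^k)\le M$. The second inequality follows by the same argument with the roles of $u$ and $v$ swapped, or equivalently by transposing the first, since $A(G^k)$ and $\mathcal{L}(G)$ are symmetric.
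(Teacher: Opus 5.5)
Your final argument is correct. The paper gives no proof of this lemma (it is quoted from Miao--Fan), and your entrywise comparison is the standard way to prove it. You count common neighbours for $A(G)^2$, and for $A(G^{k-1})A(G)$ you count neighbours of $v$ at distance between $1$ and $k-1$ from $u$. Using $k\ge 3$ in the adjacent case and transposing for the second inequality are both fine.

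One correction to the write-up: the inequality does not ``fail in this form'', and no convention is being chosen. Right multiplication by the diagonal matrix $D-I$ scales columns, so $(A(D-I))_{uv}=A_{uv}(\deg(v)-1)$ follows from the definition of the matrix product.

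Your first displayed formula for $M_{uv}$ simply miscomputed this term, in two ways:
\begin{itemize}
\item it uses $\deg(u)$ where $\deg(v)$ belongs;
\item it loses one of the two $+\mathbf{1}[u\sim v]$ contributions, one coming from $-A(D-I)=-AD+A$ and the other from $-\mathcal{L}(G)=-D+A$.
\end{itemize}
The correct expansion is
\[
M_{uv}=c_{uv}-(\deg(v)-1)\,\mathbf{1}[u\sim v]+\mathbf{1}[u\sim v]-\deg(u)\,\mathbf{1}[u=v],
\]
which immediately gives $M_{uv}=(\deg(v)-1)-(\deg(v)-1)+1=1$ for $u\sim v$. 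Replace the paragraph claiming the lemma fails with this computation, and the proof is complete.
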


As a consequence, Miao and Fan \cite{Miao_Fan_2014} established the following.

\begin{lemma}[\cite{Miao_Fan_2014}]\label{lemma:spectral_radius_powers}
    Let $G$ be a connected graph of order $n\ge 3$. 
    \begin{enumerate}[$(i)$]
        \item Then $\lambda_1(G^2)\le \lambda_1^2$ with equality if and only if $G^2$ is regular and girth $g(G)\ge 5$. 
        \item If $k\ge 3$, then $\lambda_1(G^k) < \lambda_1^k$. 
    \end{enumerate}
\end{lemma}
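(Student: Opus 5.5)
Both parts follow from the entrywise comparisons of Lemma~\ref{lemma:adjacency_graph_powers}, combined with the monotonicity of the Perron root for nonnegative matrices and the Rayleigh quotient at the Perron vector $\x$ of $G$. Let $\x>0$ be the unit Perron vector of $A=A(G)$ with $A\x=\lambda_1\x$, and write $D=D(G)$, $\mathcal{L}=\mathcal{L}(G)$.

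\emph{Part $(i)$.} By Lemma~\ref{lemma:adjacency_graph_powers}(1), $A(G^2)\le A^2-\mathcal{L}$ entrywise. Let $\mathbf{y}\ge 0$ be a unit Perron vector of $A(G^2)$. Since $\mathbf{y}\ge0$, we get
\[
\lambda_1(G^2)=\mathbf{y}^\top A(G^2)\mathbf{y}\le \mathbf{y}^\top A^2\mathbf{y}-\mathbf{y}^\top\mathcal{L}\mathbf{y}\le \lambda_1^2,
\]
because $\mathcal{L}$ is positive semidefinite and $\lambda_1(A^2)=\lambda_1^2$.

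For equality, both inequalities must be tight. Tightness of the entrywise comparison requires $\mathbf{y}^\top(A^2-\mathcal{L}-A(G^2))\mathbf{y}=0$. Since $G^2$ is connected, $\mathbf{y}>0$, so this forces $A(G^2)=A^2-\mathcal{L}$, which by Lemma~\ref{lemma:adjacency_graph_powers} holds iff $g(G)\ge5$. Tightness of the second inequality forces $\mathbf{y}^\top\mathcal{L}\mathbf{y}=0$, so $\mathbf{y}$ is constant. Hence $G^2$ is regular.

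Conversely, suppose $g\ge5$ and $G^2$ is regular. Then $A(G^2)\1=(A^2-D+A)\1$. Set $d_i=\deg(i)$. The $i$-th entry is $\sum_{j\sim i}d_j-d_i+d_i=\sum_{j\sim i}d_j$, and regularity of $G^2$ makes this constant, say $c$. So $A\mathbf{d}=c\1$. I then need $c=\lambda_1^2$. Since $g\ge5$, $\deg_{G^2}(i)=\sum_{j\sim i}(d_j-1)+d_i=\sum_{j\sim i}d_j$, which is consistent with the above. To conclude, I plan to use $\lambda_1^2\le \max_i \sum_{j\sim i} d_j$, a known bound obtained by applying the row-sum bound to $A^2$, and combine it with $\lambda_1^2=\lambda_1(A^2)\ge \1^\top A^2\1/n=c$. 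These give $\lambda_1^2=c=\lambda_1(G^2)$.

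\emph{Part $(ii)$.} I will induct on $k$, using the first inequality of Lemma~\ref{lemma:adjacency_graph_powers}(2). Let $\mathbf{y}>0$ be the Perron vector of $A(G^k)$. The right-hand side $M=A(G^{k-1})A-A(D-I)-\mathcal{L}$ need not be symmetric, so instead of a Rayleigh quotient I compare Perron roots via the Collatz--Wielandt bound applied to the Perron vector $\x$ of $G$. Since $A(G^k)\le M$, we have
\[
A(G^k)\x\le M\x=\lambda_1A(G^{k-1})\x-A(D-I)\x-\mathcal{L}\x.
\]
Also, $A(G^k)\le A(G^{k-1})A$ entrywise trivially, and more precisely $A(G^k)\le A(G^{k-1})A-A(D-I)$ holds up to the Laplacian term. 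I aim to show that $A(G^{k-1})\x\le\lambda_1^{k-1}\x$ fails only in a controlled way, which is awkward, so I will use symmetric quantities instead: $\x^\top A(G^k)\x\le \lambda_1\,\x^\top A(G^{k-1})\x-\lambda_1\x^\top(D-I)\x-\x^\top\mathcal{L}\x$.

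This handles $\x^\top A(G^k)\x$, but that quantity only bounds $\lambda_1(G^k)$ from below. So the real plan is to bound $\lambda_1(G^k)$ from above through the crude chain $A(G^k)\le A(G^{k-1})A$, used with the $\mathbf{y}$-Rayleigh quotient and the matrix norm: $\lambda_1(G^k)=\|A(G^k)\|\le\|M\|$. I then need $\|M\|<\lambda_1^k$, obtained from $\|A(G^{k-1})A\|\le\lambda_1(G^{k-1})\lambda_1$ together with a strict gap coming from the subtracted nonnegative matrix $A(D-I)+\mathcal{L}\ne0$. The gap is nonzero whenever $n\ge3$, since some vertex then has degree at least $2$.

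\textbf{Main obstacle.} The hard step is making the strict inequality rigorous for the nonsymmetric $M$. I will use that $A(G^k)\le N:=A(G^{k-1})A$ with $A(G^k)\ne N$, and that $N$ is irreducible because $A(G^{k-1})$ and $A$ have positive diagonal-free structure on a connected graph. The strict Perron--Frobenius monotonicity for irreducible nonnegative matrices then gives $\lambda_1(G^k)<\rho(N)\le\rho(A(G^{k-1}))\rho(A)$. Here $\rho(N)\le\|N\|\le\|A(G^{k-1})\|\,\|A\|$, and by induction $\rho(A(G^{k-1}))\le\lambda_1^{k-1}$, which yields $\lambda_1(G^k)<\lambda_1^k$.
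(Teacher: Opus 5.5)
Your part $(i)$ is correct. The Rayleigh quotient at the positive Perron vector $\mathbf{y}$ of $A(G^2)$, the positive semidefiniteness of $\mathcal{L}$, and the row-sum argument for the converse all work. The paper itself gives no proof of this lemma and simply cites Miao--Fan.

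\textbf{The gap is in part $(ii)$.} Your final argument rests on the claim that $A(G^k)\le N:=A(G^{k-1})A$ holds ``trivially'' entrywise. This is false whenever $G$ has a pendant vertex. If $v$ is a leaf with neighbour $u$, then $N_{uv}=\sum_{w\sim v}A(G^{k-1})_{uw}=A(G^{k-1})_{uu}=0$, while $A(G^k)_{uv}=1$; for example, take $G=P_4$ and $k=3$. The product $A\,A(G^{k-1})$ fails in the same way, with rows and columns swapped. In the Miao--Fan inequality it is exactly the term $-\mathcal{L}$, which contributes $+1$ on edges, that rescues these entries, so it cannot be dropped. Consequently your strict Perron--Frobenius monotonicity argument is valid only when $\delta(G)\ge 2$, and that excludes every tree. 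Your earlier norm-based variant is not rigorous either: $A(D-I)+\mathcal{L}$ is not a nonnegative matrix (its entry on an edge $uv$ is $\deg(v)-2$), and subtracting a nonzero matrix does not by itself force a strict drop in the spectral norm.

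\textbf{The repair} is the device you already used in part $(i)$. You briefly tried the quadratic form but evaluated it at the wrong vector: take it at the Perron vector $\mathbf{y}>0$ of $A(G^k)$, not at $\x$.
\begin{itemize}
\item Non-symmetry of $M$ does not matter. Since $A(G^k)\le M$ entrywise and $\mathbf{y}\ge 0$, you get $\lambda_1(G^k)=\mathbf{y}^\top A(G^k)\mathbf{y}\le \mathbf{y}^\top M\mathbf{y}$.
\item By Cauchy--Schwarz, $\mathbf{y}^\top A(G^{k-1})A\mathbf{y}\le \|A(G^{k-1})\mathbf{y}\|\,\|A\mathbf{y}\|\le \lambda_1(G^{k-1})\lambda_1$.
\item $\mathbf{y}^\top\mathcal{L}\mathbf{y}\ge 0$.
\item $\mathbf{y}^\top A(D-I)\mathbf{y}=\sum_{uv\in E(G)}(\deg u+\deg v-2)\,y_uy_v>0$. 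Every edge of a connected graph with $n\ge 3$ has an endpoint of degree at least $2$; this is your own observation that ``some vertex has degree at least 2''.
\end{itemize}
Together these give $\lambda_1(G^k)<\lambda_1\lambda_1(G^{k-1})\le \lambda_1^k$, by induction with base case $(i)$. This is the same computation the paper carries out in the proof of Theorem~\ref{thm:spectral_radius_higher_power}, where it proves the sharper lower bound $f_G(\x)\ge \delta(G)$ for the subtracted terms.
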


We improve the estimate for $\lambda_1(G^k)$ when $k\ge 3$.

\begin{theorem}\label{thm:spectral_radius_higher_power}
Let $k\ge 3$ and $G$ be a connected graph of order $n\ge 4$ with minimum degree $\delta(G)$. Then 
\[\lambda_1(G^k) \le \lambda_1^k - \delta(G)\frac{\lambda_1^{k-2}-1}{\lambda_1 -1}.\]
\end{theorem}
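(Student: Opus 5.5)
Let $\x$ be the Perron vector of $G^k$, which is positive because $G^k$ is connected. Write $\lambda_1=\lambda_1(G)$, $A=A(G)$, $D=D(G)$ and $\delta=\delta(G)$. The plan is to induct on $k$, using Lemma~\ref{lemma:adjacency_graph_powers} to compare quadratic forms. The quantity I will track is $\mu_k:=\max_{\x\ge 0,\,\|\x\|=1}\x^\top A(G^k)\x=\lambda_1(G^k)$. Note that $\delta(\lambda_1^{k-2}-1)/(\lambda_1-1)=\delta(1+\lambda_1+\dots+\lambda_1^{k-3})$, so the target is $\lambda_1(G^k)\le \lambda_1^k-\delta\sum_{j=0}^{k-3}\lambda_1^j$. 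This suggests proving the recursion
\[
\lambda_1(G^k)\le \lambda_1\,\lambda_1(G^{k-1})-\delta
\]
for $k\ge 3$, starting from the base case $\lambda_1(G^2)\le\lambda_1^2$ in Lemma~\ref{lemma:spectral_radius_powers}(i). Unrolling the recursion gives exactly
\[
\lambda_1^{k-2}\lambda_1^2-\delta(1+\lambda_1+\dots+\lambda_1^{k-3}).
\]

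To get the recursion, I will average the two inequalities of Lemma~\ref{lemma:adjacency_graph_powers}(2). This gives a symmetric matrix bound
\[
A(G^k)\le \tfrac12\bigl(A(G^{k-1})A+AA(G^{k-1})\bigr)-\tfrac12\bigl(A(D-I)+(D-I)A\bigr)-\mathcal{L}(G).
\]
Take $\x$ to be the Perron vector of $G^k$, which is nonnegative, so entrywise inequalities pass to quadratic forms. The first term is at most $\lambda_1(G^{k-1})\lambda_1$, since it equals $\x^\top A(G^{k-1})A\x$ and Cauchy--Schwarz gives $\|A(G^{k-1})\x\|\,\|A\x\|\le\lambda_1(G^{k-1})\lambda_1$. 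The Laplacian term is nonnegative because $\mathcal{L}(G)$ is positive semidefinite, so it can be dropped. The middle term contributes $-\x^\top(D-I)A\x$, and this is where $\delta$ must appear.

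The main obstacle is bounding this middle term. Entrywise $(D-I)A\ge(\delta-1)A$, so the term is at least $(\delta-1)\x^\top A\x$, and I need this together with the rest to contribute at least $\delta$. I would not drop $\mathcal{L}$ after all. Instead I combine:
\[
\x^\top\bigl((D-I)A+\mathcal{L}\bigr)\x\;\ge\;(\delta-1)\x^\top A\x+\x^\top D\x-\x^\top A\x\;\ge\;(\delta-2)\x^\top A\x+\delta .
\]
Here $\x^\top D\x\ge\delta$. When $\delta\ge2$ the first term is nonnegative since $A\ge0$, so we get at least $\delta$ and are done.

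The cases $\delta=1$ need separate care. For these I would use a slightly more careful comparison than averaging. One option is $\x^\top A\x\le \x^\top D\x$, which holds since $\mathcal{L}\succeq0$; this gives $\x^\top D\x-\x^\top A\x\ge0$. Then I would need the extra $\delta=1$ from $(D-I)A$ paired with the $A^2$-type term, using $n\ge4$ to make $\x^\top(D-I)A\x\ge\x^\top A\x$ hold up to the loss. If this fails, I would instead use the cruder bound $\x^\top D\x-\x^\top A\x\ge 1-\x^\top A\x$ and check that the leading term has enough slack. That slack would come from Lemma~\ref{lemma:spectral_radius_powers}(ii), which gives strictness for $k\ge3$.
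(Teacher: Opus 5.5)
Your reduction is the same as the paper's: prove $\lambda_1(G^k)\le\lambda_1\lambda_1(G^{k-1})-\delta$ from Lemma~\ref{lemma:adjacency_graph_powers} with the Perron vector $\x$ of $G^k$, then unroll down to $\lambda_1(G^2)\le\lambda_1^2$. Everything then hinges on
\[
f(\x):=\x^\top\bigl((D-I)A+\mathcal{L}\bigr)\x=\sum_{uv\in E(G)}(\deg u+\deg v-4)x_ux_v+\sum_{u}\deg(u)\,x_u^2\ \ge\ \delta .
\]
Your argument for $\delta\ge2$ is exactly the paper's Case~1. However, the case $\delta=1$ is left open, and it cannot be skipped since it contains every tree. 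Neither route you sketch works.

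The inequality $\x^\top(D-I)A\x\ge\x^\top A\x$ is equivalent to $\sum_{uv\in E}(\deg u+\deg v-4)x_ux_v\ge0$. This is false in general: for $P_n$ with $n\ge4$ the sum equals $-x_1x_2-x_{n-1}x_n<0$. The fallback through Lemma~\ref{lemma:spectral_radius_powers}(ii) fails too. That lemma only gives the non-quantitative strict inequality $\lambda_1(G^k)<\lambda_1^k$, whereas you would need to recover a positive deficit $\x^\top A\x$ at every step of the recursion.

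In fact nonnegativity of $\x$ alone is not enough. On $P_4$, take a unit vector supported on a leaf $a$ and its neighbour $b$ with $x_a>x_b>0$. Then $f(\x)=1+x_b(x_b-x_a)<1$. So the eigenvector property of $\x$ must be used.

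The missing idea is to use the eigenvalue equation of $G^k$ to control exactly the negative terms. When $\delta=1$, $G$ is connected and $n\ge4$, every edge has $\deg u+\deg v\ge3$. So the only negative coefficients come from the set $F$ of edges $ab$ with $\deg a=1$ and $\deg b=2$. This $F$ is a matching, because two such edges sharing a vertex would force $G\cong P_3$.

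Every path leaving the leaf $a$ passes through $b$, so $N_{G^k}[a]\subseteq N_{G^k}[b]$. Hence
\[
(\lambda_1(G^k)+1)x_a=\sum_{u\in N_{G^k}[a]}x_u\le\sum_{u\in N_{G^k}[b]}x_u=(\lambda_1(G^k)+1)x_b,
\]
that is, $x_a\le x_b$, and so $-x_ax_b\ge-x_b^2$. Since each such $b$ lies on at most one edge of $F$,
\[
f(\x)\ \ge\ \sum_u\deg(u)\,x_u^2-\sum_{\deg b\ge2}x_b^2\ \ge\ \sum_u x_u^2=1 .
\]
This is the paper's Case~2, and it is precisely what is needed to close the recursion when $\delta=1$.
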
 

\begin{proof} 
    We first prove the following recursive bound:
    \begin{equation}\label{eq:recursive_bound}
        \lambda_1(G^k)\le \lambda_1 \lambda_1(G^{k-1}) - \delta(G).
    \end{equation}
    So consider the unit Perron vector $\x$ corresponding to $\lambda_1(G^k)$. Using Lemma \ref{lemma:adjacency_graph_powers}, we see that 
    \begin{align*}
        \lambda_1(G^k) 
        & = \x^\top A(G^k) \x \\
        & \le \x^\top A(G)A(G^{k-1})\x - \x^\top (D(G)-I)A(G)\x - \x^\top \mathcal{L}(G)\x\\
        & \le \lambda_1\lambda_1(G^{k-1}) - f_G(\x), 
    \end{align*}
    where 
    \begin{align*}
      f_G(\x):& = \sum_{uv\in E(G)} \left(\deg(u)+\deg(v)-2\right)x_ux_v + \sum_{uv\in E(G)} (x_u-x_v)^2 \\
      & = \sum_{uv\in E(G)} \left(\deg(u)+\deg(v)-4\right)x_ux_v + \sum_{u\in V(G)} \deg(u)x_u^2.
    \end{align*}
    Thus, to prove \eqref{eq:recursive_bound}, we only need to show that $f_G(\x)\ge \delta(G)$. We consider the following cases:

    \textbf{Case 1:} $\delta(G)\ge 2$. ~ Then $\deg(u)+\deg(v)-4\ge 2\delta(G)-4\ge 0$. Thus, 
    \[ f_G(\x) \ge \sum_{u\in V(G)} \deg(u)x_u^2 \ge \delta(G)\sum_{u\in V(G)}x_u^2  = \delta(G). \]

   \textbf{Case 2:} $\delta(G) = 1$. ~ Since $G$ is connected and $n\ge 4$, we see that $\deg(u) + \deg(v)\ge 3$ for any edge $uv\in E(G)$. So consider the set 
   \[F = \{ab\in E(G): \deg(a)=1 \text{ and }\deg(b) =2\}.\]
   Observe that $F$ is a matching in $G$; indeed if $ab, bc\in F$, then $\deg(a) = \deg(c) = 1$ and $\deg(b)=2$, forcing $G\cong P_3$ which contradicts the assumption that $n\ge 4$.  
   
   Let $ab\in F$ with vertex $a$ being a leaf in $G$. Then 
   \[ (\lambda_1(G^k)+1)x_a = x_a + \sum_{ua \in E(G^k)} x_u \le x_b+\sum_{ub\in E(G^k)}x_u = (\lambda_1(G^k)+1)x_b,\]
   which implies $x_a\le x_b$. Thus, we see that 
   \begin{align*}
       f_G(\x) & \ge -\sum_{ab \in F}  x_ax_b + \sum_{u\in V(G)}\deg(u)x_u^2\\
       & \ge -\sum_{\deg(b)\ge 2}  x_b^2 + \sum_{u\in V(G)}\deg(u)x_u^2 \quad (\text{since $F$ is a matching})\\
       & \ge \sum_{u\in V(G)}x_u^2 = 1.
   \end{align*}
   We conclude that \eqref{eq:recursive_bound} holds. Now, iterating \eqref{eq:recursive_bound}, we get 
   \begin{align*}
       \lambda_1(G^k) & \le \lambda_1^{k-2}\lambda_1(G^2) - \delta(G)\sum_{i=0}^{k-3} \lambda_1^i \\
       & = \lambda_1^{k-2}\lambda_1(G^2) - \delta(G)\frac{\lambda_1^{k-2}-1}{\lambda_1-1}\\
       &\le \lambda_1^k - \delta(G)\frac{\lambda_1^{k-2}-1}{\lambda_1-1},
   \end{align*}
 where the last inequality follows by Lemma \ref{lemma:spectral_radius_powers}. This completes the proof.
\end{proof}

Next, we investigate the spectral radius of powers of line graphs; for these the structure inherited from the original edges permits a sharper comparison. Given a graph $G$, there is a natural connection between the signless Laplacian matrix $Q(G)=D(G)+A(G)$ and the adjacency matrix $A(L(G))$ of the line graph $L(G)$. The natural spectral parameter is the largest eigenvalue $q_1(G)$ of $Q(G)$. If $R(G)$ is the vertex--edge incidence matrix of $G$, then
\begin{equation}
    R(G)R(G)^\top = Q(G)\qquad \text{and}\qquad R(G)^\top R(G) = A(L(G)) + 2I.
\end{equation}
It is clear that $\lambda_1(L(G)) = q_1(G)-2$. By Lemma \ref{lemma:adjacency_graph_powers} it is therefore immediate that 
\begin{equation}
    \lambda_1(L(G)^k) \le \lambda_1(L(G))^k = (q_1(G)-2)^k.  
\end{equation}
Using the structure of line graphs, this upper bound can be substantially improved. Indeed, this generic bound treats $L(G)$ as an arbitrary graph. Instead, a shortest path in $L(G)$ can be represented by a non-backtracking walk in $G$. Counting these walks through the arc incidence structure leads to the following bound. For a $d$-regular graph, its right-hand side becomes $2\sum_{i=1}^k(d-1)^i$, the corresponding count in a tree-like edge neighbourhood; the equality condition identifies exactly when that count is attained.

\begin{theorem}\label{thm:spectral_radius_line_graph_power} Given a non-empty connected graph $G$ and $k\ge 1$, 
 \[ \lambda_1(L(G)^k)\le 2\sum_{i=1}^k \left(\frac{q_1(G)-2}{2}\right)^i.\]  
 Equality holds if and only if $k=1$ or $k\ge 2$ and $G$ is regular with girth $g(G)\ge 2k+1$. 
\end{theorem}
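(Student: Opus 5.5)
Our plan is to express adjacency in $L(G)^k$ through non-backtracking walks in $G$, encoded by the arc structure, and to dominate $A(L(G)^k)$ entrywise by a matrix whose spectral radius we can compute from $q_1(G)$.

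Let $\vec{E}$ be the set of arcs (ordered edges) of $G$. Let $T$ be the $|E|\times|\vec E|$ matrix sending each edge to its two orientations. Let $B$ be the non-backtracking (Hashimoto) matrix on arcs, with $B_{(u\to v),(v\to w)}=1$ for $w\neq u$. If $e,f$ are distinct edges at distance $i\le k$ in $L(G)$, a shortest path $e=e_0,e_1,\dots,e_i=f$ in $L(G)$ gives consecutive edges sharing a vertex, with no three consecutive edges through a common vertex (otherwise the path could be shortened). Hence it yields a non-backtracking walk in $G$ starting with some orientation of $e$ and ending with some orientation of $f$, of arc-length $i$. Therefore
\[
A(L(G)^k)\le \tfrac12\,T\Big(\sum_{i=1}^k (B^i + (B^\top)^i)\Big)T^\top,
\]
or a similar symmetrised expression. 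The cleaner route is to take the unit Perron vector $\x$ of $A(L(G)^k)$ and bound $\x^\top A(L(G)^k)\x$ directly. Set $y_a=x_e$ for each arc $a$ on edge $e$. Each adjacent pair $\{e,f\}$ is then counted by at least one non-backtracking walk of length $i$ in each direction, so
\[
\lambda_1(L(G)^k)\le \sum_{i=1}^k \frac{\mathbf y^\top B^i\mathbf y}{\,\|\mathbf y\|^2/1\,}\cdot\tfrac{1}{?},
\]
with the normalisation fixed so that $\|\mathbf y\|^2=2$.

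The key step is to bound $\mathbf y^\top B^i \mathbf y$ by $2\big((q_1-2)/2\big)^i$ for vectors $\mathbf y$ that are constant on the two orientations of each edge. Let $S$ (arcs$\to$vertices, by head) and $S'$ (by tail) be the incidence maps. Then $B = S^\top{}' \ldots$; more usefully, $B+P = M$, where $P$ is the arc-reversal involution and $M_{(u\to v),(v\to w)}=1$ for all $w$, so that $M=H^\top K$ factors through vertices. I would instead work on edges: define the weighted edge operator $N=A(L(G))$ acting via $R^\top R - 2I$. On the symmetric subspace, non-backtracking walks of length $i$ from an edge correspond to paths in $L(G)$ that avoid staying at a vertex. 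An induction then gives an entrywise domination of $\mathrm{Sym}(B^i)$ by $2\,(N/2)^i$-type expressions plus a correction. The cleanest target is to show that
\[
\mathbf y^\top B^i\mathbf y\le \mathbf y^\top B (B')^{i-1}\mathbf y
\]
with spectral norm at most $((q_1-2)/2)^{i}\|\mathbf y\|^2$. The intended source is the identity that, on symmetric vectors, $\tfrac12(B+B^\top)$ acts as $\tfrac12 A(L(G))$, whose top eigenvalue is $(q_1-2)/2$, together with the induction $\tfrac12(B^{i}+B^{\top i})\preceq$ a product of such averages. This is the main obstacle. I would first try to prove, by a Perron/Collatz–Wielandt argument, that the vector $\mathbf y$ built from the Perron vector of $Q(G)$ via $y_{u\to v}=\sqrt{\cdot}$ satisfies $B\mathbf y\le \frac{q_1-2}{2}\mathbf y$ entrywise. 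A natural candidate is $y_{u\to v}=z_u+z_v$ with $Qz=q_1 z$, because then $(B\mathbf y)_{u\to v}=\sum_{w\sim v,w\ne u}(z_v+z_w)=(q_1-2)z_v-z_u$. This is not exactly proportional to $\mathbf y$, so I would symmetrise over both orientations: for the edge vector $x_e=z_u+z_v$, which is the Perron vector of $A(L(G))$, the count gives $(q_1-2)/2$ on average per direction, yielding $\rho\big(\sum_i B^i\text{ sym}\big)\le \sum_i 2((q_1-2)/2)^i$. Then the Collatz–Wielandt bound $\lambda_1\le \max_e (A(L(G)^k)x)_e/x_e$ with $x_e=z_u+z_v$ finishes the proof, provided the walk counts from $e$ satisfy the required entrywise recursion. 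I would verify this recursion by induction on $i$, tracking separately the mass exiting through $u$ and through $v$.

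For equality, note that $k=1$ is trivial since $\lambda_1(L(G))=q_1-2$. For $k\ge2$, equality requires two things. First, the domination by walk counts must be tight, meaning every pair of edges at distance $\le k$ is joined by a unique non-backtracking walk and no walks of length $\le k$ close up; this is exactly girth $\ge 2k+1$. Second, the Collatz–Wielandt ratio must be constant, which forces the two endpoint contributions to balance, and this gives regularity. Conversely, for a $d$-regular graph of girth $\ge 2k+1$, $L(G)^k$ is regular of degree $2\sum_{i=1}^k(d-1)^i$, and $q_1=2d$ gives equality.
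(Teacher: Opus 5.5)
There is a genuine gap at the step you yourself flag as the main obstacle. Your reduction matches the paper's: $A(L(G)^k)\le\sum_{i=1}^k S^\top B^iS$ (your $T$ is $S^\top$), and with $\mathbf{y}=S\mathbf{x}$, $\|\mathbf{y}\|^2=2$, everything rests on showing $\mathbf{y}^\top B^i\mathbf{y}\le 2c^i$ with $c=(q_1(G)-2)/2$. That inequality is never proved, and the devices you propose for it fail.

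\textbf{The Collatz--Wielandt certificate fails.} Your test vector $x_e=z_u+z_v$ (the Perron vector of $A(L(G))$) does not give the bound for irregular graphs. Take a pendant edge $e$ whose inner endpoint has degree $2$. The eigen-equation of $A(L(G))$ then gives exactly $(A(L(G)^2)\mathbf{x})_e/x_e=(q_1-2)+(q_1-2)^2-1$. This exceeds the target $(q_1-2)+\tfrac12(q_1-2)^2$ as soon as $q_1>2+\sqrt2$. For example, for $G=P_n$ with $n\ge 5$, $k=2$ and $\theta=\pi/n$, the ratio is $2\cos\theta+4\cos^2\theta-1>2\cos\theta+2\cos^2\theta$.

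\textbf{No symmetric entrywise certificate exists either.} No positive reversal-symmetric $\mathbf{y}$ can satisfy $B\mathbf{y}\le c\,\mathbf{y}$ in general. On $P_n=v_0v_1\cdots v_{n-1}$ with $e_j=v_{j-1}v_j$, one has $c=\cos(\pi/n)<1$. The arcs $v_{j-1}\to v_j$ and $v_{j+1}\to v_j$ force both $y_{e_{j+1}}\le c\,y_{e_j}$ and $y_{e_j}\le c\,y_{e_{j+1}}$, which is impossible. (Incidentally, $(B\mathbf{y})_{u\to v}=(q_1-1)z_v-z_u$, not $(q_1-2)z_v-z_u$.)

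\textbf{Why averaging does not iterate.} $B$ is not normal, so the identity at $i=1$ does not propagate to higher powers. Write $H=\tfrac12(B+B^\top)$ and $K=\tfrac12(B-B^\top)$. The Loewner comparison $\tfrac12(B^i+(B^\top)^i)\preceq H^i$ you hint at does hold for $i=2$, since the left side is then $H^2-K^\top K$. It already fails for $i=3$ on $G=P_3$, where $B^2=0$ while $H^3$ has eigenvalue $-1/8$.

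\textbf{The missing ingredient.} You need a power inequality that survives non-normality. The paper uses the numerical radius $w(M)=\max_{\|\mathbf{z}\|=1}|\mathbf{z}^*M\mathbf{z}|$, Berger's power inequality $w(B^i)\le w(B)^i$, and the fact that $w(B)=\lambda_1(H)$ when $B$ is nonnegative. Your remark that $H$ acts as $\tfrac12A(L(G))$ on symmetric vectors becomes the identity $(B+B^\top)S=S\,A(L(G))$. This makes $S$ applied to the Perron vector of $L(G)$ a positive eigenvector of $B+B^\top$. Hence $w(B)=c$, and $\mathbf{y}^\top B^i\mathbf{y}\le\|\mathbf{y}\|^2w(B^i)\le 2c^i$.

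\textbf{Equality case.} Your equality argument inherits the gap, because it rests on a Collatz--Wielandt ratio being constant. The paper obtains regularity from the $i=1$ and $i=2$ terms instead. Equality at $i=1$ forces $\mathbf{x}$ to be a Perron vector of $A(L(G))$. The identity $\mathbf{y}^\top B^2\mathbf{y}=\|H\mathbf{y}\|^2-\|K\mathbf{y}\|^2$ then forces $KS\mathbf{x}=0$, i.e.\ $\sum_{e\ni v}x_e$ is independent of $v$. The eigen-equation then makes $\mathbf{x}$ constant and $G$ regular. Your handling of the girth condition and of the converse is fine.
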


\begin{proof} For the graph $G$, define its set of arcs to be the set 
\[\overrightarrow{E}(G):=\{(u,v), (v,u): uv\in E(G)\}.\]
Define the non-backtracking matrix $B$ indexed by arcs as follows:
\[ B_{(u,v), (x,y)}:= 
\begin{cases}
    1 & v = x \text{ and }y\ne u;\\
    0 & \text{otherwise}.
\end{cases}\]
Let $S$ denote the arc-edge incidence matrix defined as follows: for $(u,v)\in \overrightarrow{E}(G)$ and $e\in E(G)$, 
\[ S_{(u,v), e} :=
\begin{cases}
    1 & e = \{u,v\};\\
    0 & \text{ otherwise}.
\end{cases}\]
Clearly, $S^\top S = 2I$. For $i\ge 1$, note that the entry $(S^\top B^iS)_{e,f}$ $(e,f\in E(G))$ counts non-backtracking walks in $G$ consisting of $i+1$ edges with first edge $e$ and last edge $f$. Every shortest path of length $i$ in $L(G)$ corresponds to such a non-backtracking walk. Hence, 
\begin{equation}\label{eq:adjacency_line_comparison}
   A(L(G)^k)\le \sum_{i=1}^k S^\top B^iS. 
\end{equation}
Moreover, 
\begin{equation}\label{eq:B_and_L_G}
    (B + B^\top)S = SA(L(G)). 
\end{equation}
It is therefore clear that $\lambda_1(B+B^\top) = \lambda_1(A(L(G))) = q_1(G)-2$. Let $\x$ denote the unit Perron vector of $A(L(G)^k)$. Then $\|S\x\|^2 = 2$ since $S^\top S = 2I$. 

For a complex $k\times k$ matrix $M$ (not necessarily symmetric), its numerical radius is given by 
\[w(M):=\max_{\|\z\|=1}|\z^* M\z|,\quad \z\in \mathbb{C}^k.\]
By Berger's power inequality (cf. \cite{Berger_Stampfli_1967}), it is known that $w(M^i)\le w(M)^i$. Also, if $M$ is real and non-negative, then $w(M) = \lambda_1\left(\frac{M+M^\top}{2}\right)$. Using these facts, we see that 
\begin{align*}
    \lambda_1(L(G)^k) & = \x^\top A(L(G)^k)\x\\
    & \le \sum_{i=1}^k (S\x)^\top B^i (S\x)\\
    & \le \|S\x\|^2 \sum_{i=1}^k  w(B^i)\\
    & \le 2 \sum_{i=1}^k  w(B)^i\\
    & = 2\sum_{i=1}^k \lambda_1\left(\frac{B+B^\top}{2}\right)^i \\
    & = 2\sum_{i=1}^k \left(\frac{q_1(G)-2}{2}\right)^i. 
\end{align*}
This proves the inequality. 

We now analyze the equality case. Clearly, equality holds when $k=1$; so assume $k\ge 2$. Then we must have
\[ (S\x)^\top B (S\x) = 2 \lambda_1\left(\frac{B+B^\top}{2}\right),\]
which implies that $\x$ is a Perron eigenvector for $A(L(G))$ by \eqref{eq:B_and_L_G}. We must also have
\[ (S\x)^\top B^2 (S\x) = 2 \lambda_1\left(\frac{B+B^\top}{2}\right)^2. \]
But 
\[(S\x)^\top B^2 (S\x) = \left\|\frac{B+B^\top}{2}S\x\right\|^2 - \left\|\frac{B-B^\top}{2}S\x \right\|^2.\]
On comparison, we get 
\[ \frac{B-B^\top}{2}S\x  = 0.\]
Hence, for an arc $(u,v)$, 
\[\left(\sum_{ e\ni v} x_e \right) - x_{uv} = (BS\x)_{(u,v)} = (B^\top S\x)_{(u,v)} = \left(\sum_{ e\ni u} x_e \right) - x_{uv}.\]
By the connectedness of the graph, the quantity $\sum_{ e\ni v} x_e$ is a constant, say $s$, for all vertices $v\in V(G)$. By the eigenvalue equation, 
\[\lambda_1(L(G))x_{uv} + 2x_{uv} = \sum_{ e\ni v}x_e + \sum_{ e\ni u}x_e =2s.\]
Thus, $x_{e}$ is a constant, say $r$, for all edges $e\in E(G)$. It follows that 
\[s = \sum_{e\ni u} x_e = \deg_G(u)r,\]
implying that $\deg_G(u)$ is a constant, i.e., $G$ is regular. 

Finally, equality in \eqref{eq:adjacency_line_comparison} is equivalent to $g(G)\ge 2k+1$. Combining these observations gives the characterization for equality. 
\end{proof}

\section{Upper bounds} \label{sec:wilfspectrarelated}

\subsection{Wilf-type upper bound for graph and line graph powers}

We first recall the degree bounds against which the spectral estimates can be compared. The greedy colouring bound, with the equality characterization supplied by Brooks' theorem, gives, for every connected graph $G$,
\begin{equation}\label{eq:Brooks}
  \chi(G)\le 1 + \Delta(G), 
\end{equation}
with equality if and only if $G$ is a complete graph or an odd cycle.

For given positive integers $k$ and $\Delta$, define 
\[ D_{k, \Delta}: = \max\ \{\Delta(G^k): \Delta(G)\le \Delta\},\]
i.e., $D_{k, \Delta}$ is the maximum degree of all $k$-powers of graphs with maximum degree at most $\Delta$. It is easy to see that 
\[ D_{k, \Delta} = \sum_{i=1}^k \Delta(\Delta - 1)^{i-1} =
\begin{cases}
  2k & \text{ if }\Delta =2;\\
  \Delta \frac{(\Delta-1)^k - 1}{\Delta - 2} & \text{ if }\Delta \ge 3.
\end{cases} \]

This maximum is attained by a rooted tree of depth $k$ in which the root has $\Delta$ children and every other non-leaf vertex has $\Delta-1$ children. Thus, \eqref{eq:Brooks} gives
\[\chi_k(G)\le 1 + D_{k, \Delta}\]
for any graph $G$ with maximum degree $\Delta$. But this can be improved. Indeed, the following theorem improves this degree-only estimate by exploiting the structure of a graph power. They provide a useful comparison with the spectral bounds below, especially in the regular case.

\begin{theorem} The following hold for a connected graph $G$ with $\Delta(G) = \Delta \ge 3$.  
\begin{enumerate}[$(i)$]
    \item (\cite{Cranston_Rabern_2016}) If $G^2 \neq K_{\Delta^2 +1}$, then $\chi_2(G)\le \Delta^2 -1 = D_{2, \Delta}-1$.
    \item (\cite{Bonamy_Bousquet_2014}) If $k\ge 3$, then $\chi_k(G)\le D_{k, \Delta} - 1$.
\end{enumerate}
\end{theorem}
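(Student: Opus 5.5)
Both parts of this theorem are cited from \cite{Cranston_Rabern_2016} and \cite{Bonamy_Bousquet_2014}. The approach I would take for each is the Brooks-type greedy colouring with \emph{savings}, applied to $H=G^k$. By the tree count preceding the theorem, $\Delta(H)\le D_{k,\Delta}$. Brooks' theorem \eqref{eq:Brooks} already gives $\chi(H)\le D_{k,\Delta}$ unless $H$ is complete, so one colour must be saved beyond Brooks.

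The basic tool is the following ordering argument. Pick a root $v$ and order $V(H)$ by decreasing distance from $v$ in a spanning tree of $H$. Then every vertex other than $v$ has a neighbour that is coloured after it, so greedy colouring uses at most $\Delta(H)$ colours on it. To save a second colour we need slack at $v$. One source of slack is two pairs $\{x_1,y_1\},\{x_2,y_2\}\subseteq N_H(v)$ of vertices nonadjacent in $H$ (distance $>k$ in $G$) that we precolour with two repeated colours; after deleting them the rest of $H$ must stay connected. Another source is vertices of $H$-degree at most $D_{k,\Delta}-2$, which can be placed last. Either way, $v$ then sees at most $D_{k,\Delta}-2$ distinct colours.

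The plan is then a case analysis on the local structure of $G$.

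\textbf{Case 1: many low-degree vertices.} If $G$ is not $\Delta$-regular, or contains a short cycle (length at most $2k$), then some vertex $u$ has $\deg_H(u)$ strictly below $D_{k,\Delta}$. A cycle of length at most $2k+1$ through $u$ forces repeated vertices in the tree count, giving $\deg_H(u)\le D_{k,\Delta}-1$. I would show that one can find either a vertex of $H$-degree at most $D_{k,\Delta}-2$, or two such deficient vertices close together, and use them as the terminal vertices of the ordering.

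\textbf{Case 2: locally Moore-like.} Otherwise $G$ is $\Delta$-regular with large girth near every vertex. Then $N_H(v)$ is essentially the full tree of depth $k$. In that tree, two leaves in different branches are at distance up to $2k>k$ in $G$, so they are nonadjacent in $H$. This supplies the two disjoint nonadjacent pairs needed for the double saving. For $k=2$ the only way this fails is when $G^2$ is complete, that is, $G$ is a Moore graph of diameter $2$, and $G^2=K_{\Delta^2+1}$ is exactly the stated exception. For $k\ge3$ no exception occurs, because the balls are large enough to always produce two such pairs.

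The main obstacle I expect is keeping the remainder connected after identifying the precoloured pairs, so that the ordering argument still applies. This is the delicate part of \cite{Cranston_Rabern_2016}. There I would try the vertices $x_i,y_i$ at distance exactly $2$ from $v$, in different branches of the tree. If connectivity fails there, I would fall back on a Kempe-chain or list-colouring (degree-choosability) argument, with $v$'s neighbourhood treated as a subgraph that is not a Gallai tree.
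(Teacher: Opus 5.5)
The paper gives no proof of this theorem: both parts are quoted from Cranston--Rabern and Bonamy--Bousquet. Your outline therefore has to stand on its own, and it has two genuine gaps.

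\textbf{The greedy mechanism does not reach $D_{k,\Delta}-1$ colours.} In your ordering every non-root vertex has only one later neighbour in $H=G^k$. That leaves up to $\Delta(H)-1$ earlier neighbours, which is enough for a palette of $D_{k,\Delta}$ colours but not for $D_{k,\Delta}-1$. Any non-root vertex of $H$-degree $D_{k,\Delta}$ with a single later neighbour can be blocked, so creating slack only at $v$ saves nothing; every vertex needs two units of slack.

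The usual repair is to order by decreasing distance from $v$ in $G$, not in a spanning tree of $H$. For $k\ge 2$, a vertex at $G$-distance at least $2$ from $v$ then has its $G$-parent and $G$-grandparent as later $H$-neighbours. The difficulty is then concentrated at $v$ and at the last-coloured $G$-neighbour of $v$. Any precoloured pairs must also be chosen without destroying these later neighbours. That local analysis near the root is where the cited case analyses do their work, and your plan does not supply it.

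The same oversight makes your Case~1 alternative (``two deficient vertices close together'') non-reducible: a root of deficiency one still has $D_{k,\Delta}-1$ earlier neighbours. Also, a cycle of length $2k+1$ through $u$ does not lower $\deg_H(u)$, since Moore graphs have girth $2k+1$ and full balls; only cycles of length at most $2k$ do.

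\textbf{The statement contains nonexistence theorems that local counting cannot produce.} If $G^k$ were complete on $D_{k,\Delta}$ vertices, or (for $k\ge 3$) on $D_{k,\Delta}+1$ vertices, the claimed bound would fail. Hence:
\begin{itemize}
\item part (i) contains the Erd\H{o}s--Fajtlowicz--Hoffman theorem that no graph of maximum degree $\Delta\ge 3$ and diameter $2$ has $\Delta^2$ vertices;
\item part (ii) contains the nonexistence of Moore graphs of degree $\Delta\ge 3$ and diameter $k\ge 3$ (Damerell, Bannai--Ito), and of graphs of diameter $k$ one vertex short of the Moore bound.
\end{itemize}
These results are proved by eigenvalue arguments. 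In such hypothetical graphs no $H$-neighbourhood contains a nonadjacent pair and no vertex has deficiency two, so none of your saving configurations exists.

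Your Case~2 hides exactly this. Two leaves in different branches of the depth-$k$ tree are at \emph{tree} distance $2k$, but their distance in $G$ can be at most $k$; in a Moore graph it always is. So ``the balls are large enough to always produce two such pairs'' is not an argument. Likewise, ``$G^2$ complete'' only means diameter at most $2$, not that $G$ is a Moore graph. The case of $\Delta^2$ vertices falls into your Case~1 with every vertex of deficiency exactly one, where no ordering can help. A correct proof must invoke these nonexistence results, or reproduce the cited papers' configuration analysis, at precisely these points.
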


The following result of Wilf \cite{Wilf_1967} strengthens \eqref{eq:Brooks} since $\lambda_1(G)\le \Delta(G)$.
 
\begin{theorem}[Wilf's inequality \cite{Wilf_1967}]\label{thm:Wilf}
For any connected graph $G$, we have 
    \[ \chi(G) \le 1 + \lambda_1(G).\]
Equality holds if and only if $G$ is a complete graph or an odd cycle.  
\end{theorem}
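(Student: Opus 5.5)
The goal is Wilf's inequality, Theorem~\ref{thm:Wilf}: $\chi(G)\le 1+\lambda_1(G)$ for connected $G$, with equality exactly for complete graphs and odd cycles. The plan is a degeneracy argument. First, $\chi(G)\le 1+\max_H \delta(H)$, where $H$ ranges over induced subgraphs of $G$. To see this, repeatedly delete a vertex of minimum degree in the current induced subgraph. Then colour greedily in the reverse order: each vertex, when coloured, has at most $\max_H\delta(H)$ earlier-coloured neighbours.

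Second, I would show $\delta(H)\le \lambda_1(H)\le \lambda_1(G)$ for every induced subgraph $H$. The first inequality comes from the Rayleigh quotient with $\1$: $\lambda_1(H)\ge \1^\top A(H)\1/|V(H)| = \bar d(H)\ge \delta(H)$. The second is Cauchy interlacing, since $A(H)$ is a principal submatrix of $A(G)$. Together these give $\chi(G)\le 1+\lfloor\lambda_1(G)\rfloor\le 1+\lambda_1(G)$.

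For the equality case, first check that the listed graphs attain it. $K_n$ has $\lambda_1=n-1$ and $\chi=n$. An odd cycle has $\lambda_1=2$ and $\chi=3$.

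For the converse, suppose $\chi(G)=1+\lambda_1(G)$. Then $\lambda_1(G)=t$ is an integer, and some induced subgraph $H$ has $\delta(H)=t=\lambda_1(H)=\lambda_1(G)$; otherwise the greedy argument would give fewer colours. Take a connected component of $H$ attaining this, again called $H$.
\begin{itemize}
\item $\lambda_1(H)=\bar d(H)$ forces equality in the Rayleigh quotient, so $\1$ is an eigenvector and $H$ is $t$-regular.
\item If $H$ were a proper subgraph of the connected graph $G$, Perron--Frobenius would give $\lambda_1(H)<\lambda_1(G)$. Hence $H=G$ and $G$ is $t$-regular.
\item Now $\chi(G)=\Delta(G)+1$, so Brooks' theorem, as recorded in \eqref{eq:Brooks}, gives that $G$ is complete or an odd cycle.
\end{itemize}

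The main obstacle is the strictness step, where the whole graph must be regular rather than just some subgraph. This needs the strict monotonicity of $\lambda_1$ under taking proper subgraphs of a connected graph. That property follows because the Perron vector of a connected graph is strictly positive: extend the Perron vector of $H$ by zeros and compare Rayleigh quotients in $G$. If the extended vector attained $\lambda_1(G)$, it would be an eigenvector with a zero entry, which contradicts positivity.
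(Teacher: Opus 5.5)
Your proof is correct and follows the route the paper indicates. The paper does not prove Wilf's inequality; it only remarks that $\chi(G)\le\chi_{\col}(G)$ together with $\delta(H)\le\lambda_1(H)\le\lambda_1(G)$ ``is precisely the proof of Wilf's inequality'', which is your degeneracy-plus-interlacing argument, and for the equality case it simply cites Wilf, so your reduction (a $t$-regular induced subgraph with $\lambda_1=t$, forced to be all of $G$ by strict Perron--Frobenius monotonicity, then Brooks' theorem) validly supplies that part.
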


An analogous result for graph powers was established by Miao and Fan \cite{Miao_Fan_2014}.

\begin{theorem}[\cite{Miao_Fan_2014}]\label{thm:wilf_type_bound_graph_powers}
        Let $G$ be a connected graph of order $n\ge 3$. 
    \begin{enumerate}[$(i)$]
        \item Then $\chi_2(G)\le 1 + \lambda_1^2$ with equality if and only if $G^2 = K_{\lambda_1^2 + 1}$, i.e., $G$ is one of the following graphs: a star, the pentagon, the Petersen graph, the Hoffman–Singleton graph, and putative $57$-regular graphs on $3250$ vertices. 
        \item If $k\ge 3$, then $\chi_k(G) < 1 + \lambda_1^k$. 
    \end{enumerate}
\end{theorem}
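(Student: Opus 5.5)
The plan is to combine Wilf's inequality (Theorem~\ref{thm:Wilf}) applied to the graph $G^k$ with the spectral radius estimates of Lemma~\ref{lemma:spectral_radius_powers}. Since $G$ is connected, so is $G^k$. Wilf's inequality therefore gives $\chi_k(G)=\chi(G^k)\le 1+\lambda_1(G^k)$, with equality if and only if $G^k$ is complete or an odd cycle.

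For part $(ii)$, with $k\ge 3$, Lemma~\ref{lemma:spectral_radius_powers}$(ii)$ gives $\lambda_1(G^k)<\lambda_1^k$. Chaining this with Wilf's inequality yields $\chi_k(G)\le 1+\lambda_1(G^k)<1+\lambda_1^k$ immediately.

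For part $(i)$, the inequality $\chi_2(G)\le 1+\lambda_1(G^2)\le 1+\lambda_1^2$ follows in the same way from Lemma~\ref{lemma:spectral_radius_powers}$(i)$. The main work is the equality case, and I would handle it in three steps.

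\textbf{Step 1 (reduce to a Moore-type condition).} Equality forces equality in both inequalities. Equality in the second means $G^2$ is regular and $g(G)\ge 5$. Equality in Wilf's inequality means $G^2$ is complete or an odd cycle. If $G^2$ is an odd cycle $C_\ell$, then since $G$ is a connected spanning subgraph of $G^2$ and $n\ge 3$, $G$ is a path or a cycle. A path on $n\ge 3$ vertices has a square that is not a cycle unless $n=3$, where $G^2=K_3$ falls under the complete case. A cycle $C_n$ has $C_n^2=C_n$ only if the diameter is at most $1$, i.e.\ $n=3$, but then the girth is $3$, contradicting $g(G)\ge 5$. So $G^2=K_n$ and $\chi_2(G)=n=1+\lambda_1^2$. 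Conversely, if $G^2=K_{\lambda_1^2+1}$ then equality holds trivially, which gives the stated ``i.e.'' form.

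\textbf{Step 2 (identify the graphs).} We now have $\diam(G)\le 2$, $g(G)\ge 5$, and $n=1+\lambda_1^2$. Counting from any vertex $v$, girth at least $5$ together with diameter at most $2$ makes the sets $\{v\}$, $N(v)$ and $N(w)\setminus\{v\}$ (for $w\in N(v)$) a partition of $V$. Hence
\[
n=1+\sum_{w\sim v}\deg(w)\quad\text{for every vertex } v.
\]
Summing over all $v$ gives $n^2=n+\sum_w \deg(w)^2$, so $\lambda_1^2=n-1=\frac1n\sum_w\deg(w)^2$. On the other hand, Hofmeister's bound gives $\lambda_1^2\ge \frac1n\sum_w \deg(w)^2$, with equality if and only if $G$ is regular or semiregular bipartite.

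\textbf{Step 3 (case split).} If $G$ is bipartite, a diameter-$2$ bipartite graph of girth at least $5$ must be a star. If $G$ is $d$-regular, then $n=1+d^2$ and $G$ is a Moore graph of diameter $2$. The Hoffman--Singleton theorem then gives the pentagon, the Petersen graph, the Hoffman--Singleton graph, or a putative $57$-regular graph on $3250$ vertices.

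The main obstacle is Step~2. One needs to pin down exactly when the degree identity and the spectral equality force regularity or the star structure. I would justify this through the equality case of Hofmeister's inequality $\lambda_1^2\ge \frac1n\sum \deg^2$, which characterises equality by regular or semiregular bipartite graphs.
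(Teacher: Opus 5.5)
Your proposal is correct and follows essentially the route the paper relies on: Wilf's inequality applied to the connected graph $G^k$, chained with Lemma~\ref{lemma:spectral_radius_powers}, gives both inequalities, including the strict one for $k\ge 3$. The paper gives no proof of the equality classification and simply cites Miao--Fan; your argument correctly supplies it, by forcing $G^2=K_n$ and $g(G)\ge 5$, using the count $n=1+\sum_{w\sim v}\deg(w)$ and the equality case of Hofmeister's bound $\lambda_1^2\ge\frac1n\sum\deg^2$, and then splitting into the star and Moore-graph cases via Hoffman--Singleton.
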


Using Theorems \ref{thm:spectral_radius_higher_power} and \ref{thm:Wilf}, we immediately get the following improvement when $k\ge 3$. 

\begin{theorem}\label{thm:wilf_graph_power_improved}
     Let $G$ be a connected graph of order $n\ge 4$ and let $k\ge 3$. Then
     \[\chi_k(G) \le 1 + \lambda_1^k - \delta(G)\frac{\lambda_1^{k-2}-1}{\lambda_1 -1}.\]
\end{theorem}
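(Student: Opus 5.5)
This is a direct combination of two earlier results. We apply Wilf's inequality (Theorem~\ref{thm:Wilf}) to the graph $G^k$, and then bound its spectral radius using Theorem~\ref{thm:spectral_radius_higher_power}.

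First we check that Wilf's inequality applies. Theorem~\ref{thm:Wilf} needs a connected graph. Since $G$ is connected, $G^k$ is also connected: it has the same vertex set and contains every edge of $G$. Hence
\[
\chi_k(G)=\chi(G^k)\le 1+\lambda_1(G^k).
\]

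Next we bound $\lambda_1(G^k)$. The hypotheses of Theorem~\ref{thm:spectral_radius_higher_power} are exactly ours: $k\ge 3$, $G$ connected, and $n\ge 4$. That theorem gives
\[
\lambda_1(G^k)\le \lambda_1^k-\delta(G)\,\frac{\lambda_1^{k-2}-1}{\lambda_1-1}.
\]
Substituting this into the previous display yields the claimed bound.

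The only point needing care is the quotient $\frac{\lambda_1^{k-2}-1}{\lambda_1-1}$ when $\lambda_1=1$. A connected graph on $n\ge 4$ vertices contains the path $P_4$ as a subgraph. By monotonicity of the spectral radius under subgraphs, $\lambda_1\ge \lambda_1(P_4)>1$, so the quotient is well defined. The quotient also equals the geometric sum $\sum_{i=0}^{k-3}\lambda_1^i$, which is what actually appears in the iteration proving Theorem~\ref{thm:spectral_radius_higher_power}. Apart from this check there is no real obstacle, since the substantive work was already done in Theorem~\ref{thm:spectral_radius_higher_power}.
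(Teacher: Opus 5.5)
Your argument is correct and is exactly the paper's: apply Wilf's inequality (Theorem~\ref{thm:Wilf}) to the connected graph $G^k$, then bound $\lambda_1(G^k)$ using Theorem~\ref{thm:spectral_radius_higher_power}. One small slip in your side remark: a connected graph on $n\ge 4$ vertices need not contain $P_4$ (the star $K_{1,3}$ does not), but it always contains $P_3$, so $\lambda_1\ge\lambda_1(P_3)=\sqrt{2}>1$ and the quotient is still well defined.
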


For any fixed $k$, Cames van Batenburg and Korsky \cite{vanBatenburg_Korsky_2026} recently constructed regular graphs $H_{k,q}$ with $\diam(H_{k,q}) = k$ and order $(1+o(1))\Delta(H_{k,q})^k$. Clearly, then $\lambda_1(H_{k,q}) = \Delta(H_{k,q})$ and $H_{k,q}^k$ is a complete graph, and so 
\[ \frac{\chi_k(H_{k,q})}{\lambda_1(H_{k,q})^k}\to 1 \text{ as }q\to \infty.\]
In particular, the bound in Theorem \ref{thm:wilf_graph_power_improved} is asymptotically sharp.

Whenever $\lambda_1 \le \Delta(G)-1$, Theorem \ref{thm:wilf_graph_power_improved} gives a much better upper bound than $D_{k, \Delta} - 1$ for $\chi_k(G)$. On the other hand, when $G$ is regular, $\lambda_1 = \Delta(G)$, and so in this case $D_{k, \Delta}-1$ is a better bound. 

The \emph{coloring number} of a graph $G$, denoted by $\chi_{\col}(G)$, is defined to be 
\[ \chi_{\col}(G):= 1 + \max_{H\subseteq G}\delta(H).\]
Let $\chi_\ell(G)$ and $\chi_p(G)$ denote the \emph{list chromatic number} and the \emph{paint number} of $G$, respectively. It is well-known that (see, for instance, \cite{Cranston_2023})
\[ \chi(G)\le \chi_\ell(G)\le \chi_p(G)\le \chi_{\col}(G)\le 1 + \Delta(G).\]
Since $\delta(H)\le \lambda_1(H)\le \lambda_1(G)$ for any subgraph $H$ of $G$, we see that $1 + \lambda_1$ is indeed an upper bound for $\chi_{\col}(G)$ (this is precisely the proof of Wilf's inequality). Thus, all of the above spectral bounds for $\chi_k$ also extend to distance-$k$ analogues of list chromatic number and paint number of graphs. 

For line graphs, we obtain the following Wilf-type inequality as a consequence of Theorem \ref{thm:spectral_radius_line_graph_power}. 

\begin{theorem}\label{thm:wilf_line_graph power}
Given a connected graph $G$ of order $n\ge 3$ and $k\ge 1$, 
 \[ \chi_k(L(G))\le 1 + 2\sum_{i=1}^k \left(\frac{q_1(G)-2}{2}\right)^i.\] 
Equality holds if and only if one of the following occurs:
\begin{enumerate}[$(i)$]
    \item $k=1$ and $G$ is a star or an odd cycle. 
    \item $k\ge 2$ and $G\cong C_{2k+1}$. 
\end{enumerate}
\end{theorem}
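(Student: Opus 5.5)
The inequality comes from chaining two earlier results. Since $G$ is connected with $n\ge 3$, it has at least two edges and $L(G)$ is connected, hence so is $L(G)^k$. Applying Wilf's inequality (Theorem~\ref{thm:Wilf}) to $L(G)^k$ gives $\chi_k(L(G))=\chi(L(G)^k)\le 1+\lambda_1(L(G)^k)$. Theorem~\ref{thm:spectral_radius_line_graph_power} then bounds $\lambda_1(L(G)^k)$ by $2\sum_{i=1}^k\bigl(\tfrac{q_1(G)-2}{2}\bigr)^i$.

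For equality, both steps must be tight. Tightness in Wilf's inequality forces $L(G)^k$ to be a complete graph or an odd cycle. Tightness in Theorem~\ref{thm:spectral_radius_line_graph_power} means that either $k=1$, or $k\ge 2$ and $G$ is regular with girth $g(G)\ge 2k+1$.

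\emph{Case $k=1$.} We need $L(G)$ to be complete or an odd cycle.
\begin{itemize}
\item $L(G)$ is complete exactly when all edges pairwise meet. For $n\ge 3$ this means $G$ is a star or $K_3$.
\item $L(G)$ is an odd cycle exactly when $G$ is an odd cycle, since a connected graph with cycle line graph is itself that cycle. This case includes $K_3=C_3$.
\end{itemize}
Conversely, for a star $K_{1,m}$ we have $q_1=m+1$, so the right-hand side is $1+(m-1)=m=\chi(K_m)$. For an odd cycle we have $q_1=4$, giving $1+2=3$. So equality holds in these cases.

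\emph{Case $k\ge 2$.} Here $G$ is $d$-regular with $g(G)\ge 2k+1$, and $L(G)^k$ is complete or an odd cycle.
\begin{itemize}
\item If $d=2$, then $G=C_n$ with $n\ge 2k+1$, and $L(G)^k=C_n^k$. This is complete iff $n\le 2k+1$, hence iff $n=2k+1$. It is an odd cycle only if $k=1$, which is excluded. So $G\cong C_{2k+1}$. Then $q_1=4$, the bound equals $1+2k$, and indeed $\chi(K_{2k+1})=2k+1$.
\item If $d\ge 3$, the girth condition makes every edge neighbourhood in $L(G)^k$ tree-like. The degree of $L(G)^k$ is then $2\sum_{i=1}^k(d-1)^i$, which equals the bound. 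Hence $L(G)^k$ would have to be complete, i.e.\ $|E(G)|=1+2\sum_{i=1}^k(d-1)^i$, so every two edges lie at distance at most $k$ in $L(G)$.
\end{itemize}

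The main obstacle is ruling out the subcase $d\ge 3$: this is a Moore-type bound for edges. I would take an edge $uv$ and consider the edges at distance exactly $k$ from it, namely those at the end of non-backtracking walks of length $k$ from $u$ or from $v$. Using $g(G)\ge 2k+1$, I would count edges at the outermost layer. Each far endpoint has $d-1\ge 2$ further edges, and these must reach back within distance $k$ of $uv$. That forces a cycle of length at most $2k$, contradicting the girth condition.

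Concretely, an outer edge $e$ hanging from a vertex $w$ at distance $k$ on the $u$-side must also be within distance $k$ of the edges on the $v$-side. Path counting then gives a closed walk through $uv$ of length at most $2k$, which is a cycle shorter than $2k+1$. If this argument turns out messy, I would instead use a counting and parity argument on $|E|=nd/2$. Under these conditions $L(G)$ would be a distance-regular graph of diameter $k$ meeting a Moore bound, which the known nonexistence results for $d\ge 3$, $k\ge 2$ exclude.
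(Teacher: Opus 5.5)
Your proposal is correct and matches the paper except in how the regular case $k\ge2$ is closed.

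\textbf{The paper's route.} The paper never splits by degree. Once $L(G)^k$ is complete, it writes $m=|E(G)|=1+2\sum_{i=1}^k(\Delta-1)^i\equiv\pm1\pmod{\Delta}$. Since $\Delta$ divides $2m=n\Delta$, this gives $\Delta\mid 2$. That leaves only cycles, which are settled exactly as in your $d=2$ case.

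\textbf{Your route.} You exclude $d\ge3$ by a local Moore-type argument, and your first formulation works once the pigeonhole step is made explicit:
\begin{enumerate}
\item Girth $\ge 2k+1$ makes the subgraph induced on the vertices within distance $k-1$ of $\{u,v\}$ a tree containing $uv$. In it, each vertex has a well-defined side ($u$ or $v$).
\item A vertex $x$ at distance $k$ must have all $d$ neighbours at distance $k-1$. Otherwise some edge at $x$ lies at distance $k+1$ from $uv$ in $L(G)$.
\item With $d\ge3$, two of these neighbours lie on the same side. Their tree paths to their last common vertex, together with $x$, give a cycle of length at most $2k$.
\end{enumerate}

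\textbf{Comparison.} The arithmetic route is shorter and uses only the edge count. Yours shows structurally that $L(G)^k$ cannot be complete when $d\ge3$ and $g(G)\ge 2k+1$, without needing the exact count.

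\textbf{Remarks to drop or fix.}
\begin{itemize}
\item Drop the ``concretely'' variant. A return to the opposite side only yields a closed walk of length $2k+1$, which the girth permits. So the three-neighbours/two-sides count is what actually makes the argument work, and a short closed walk need not contain a cycle in any case.
\item Drop the Moore-graph fallback. $L(G)$ is $(2d-2)$-regular with far fewer vertices than the Moore bound for diameter $k$, so no nonexistence theorem applies directly. Your parity fallback on $|E|=nd/2$, by contrast, is precisely the paper's argument.
\item Note that $L(K_{1,3})=C_3$, so a connected graph whose line graph is a cycle need not be that cycle. This is harmless, since stars are already in your list.
\end{itemize}
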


\begin{proof} The inequality is immediate by Wilf's inequality and Theorem \ref{thm:spectral_radius_line_graph_power}. We now analyze the equality case. Suppose equality holds for some graph $G$. We consider the following cases:

\textbf{Case 1:} $k=1$. ~ By Theorem \ref{thm:Wilf}, we must have that $L(G)$ is either a complete graph or an odd cycle, which implies $G$ is either a star or an odd cycle. It is easy to check that equality holds in both of these cases. 

\textbf{Case 2:} $k\ge 2$. ~ By Theorem \ref{thm:spectral_radius_line_graph_power}, $G$ must be regular and $g(G)\ge 2k+1$. By Theorem \ref{thm:Wilf}, we must further have that $L(G)^k$ is complete as $L(G)^k$ cannot be an odd cycle under these hypotheses. We thus have $q_1(G) = 2\Delta$ where $\Delta = \Delta(G)$, $\chi_k(L(G)) = m$ where $m = |E(G)|$, and 
\[ m = 1 + 2 \sum_{i=1}^k (\Delta(G)-1)^i. \]
Modulo $\Delta$, we have
\begin{equation}
    m \equiv 
    \begin{cases}
    1 \mod \Delta & \text{ if } k \text{ is even};\\
    -1 \mod \Delta & \text{ if } k \text{ is odd}.
    \end{cases}
\end{equation}
Since $2m = n\Delta$, we conclude that $\Delta$ divides $2$. If $\Delta = 1$ then $G\cong K_2$. If $\Delta = 2$, then $G\cong C_n$. Since $g(G)\ge 2k+1$ and $\diam(G)\le k$, we conclude that $n = 2k+1$, i.e., $G\cong C_{2k+1}$. Conversely, it is clear that equality holds when $G\cong C_{2k+1}$. 
\end{proof}

Note that at $k=2$ this is an upper bound for the strong chromatic index in terms of $q_1(G)$. The equality characterization is more restrictive than for the spectral-radius bound itself: for higher powers, equality in both steps forces an odd cycle of the appropriate length.

\subsection{Upper bound using Expander Mixing Lemma}

For regular graphs, the nonprincipal eigenvalues can give information about cliques that is lost in a bound using only the spectral radius. Suppose that $p(A(G))$ dominates $A(G^k)$ entrywise. On a clique of $G^k$, the associated quadratic form must then be large. An expander-mixing argument bounds the same form by separating the constant eigenvector from its orthogonal complement. Comparing the two estimates gives the following theorem. Note that the hypothesis is polynomial domination rather than a polynomial identity for $A(G^k)$.

\begin{theorem}[Polynomial EML bound]\label{thm:EML_approach}
Let $G$ be a connected $d$-regular graph of order $n\ge 4$ and $p(x)\in \mathbb{R}_t[x]$ $(t\ge k)$ be such that $A(G^k)\le p(A(G))$. Let $\beta:=\max_{2\le i\le n} p(\lambda_i)$. If $n - p(\lambda_1) +  \beta > 0$, then
\[
\frac{\omega_k(G)}{n} \le \frac{1 + \beta}{n - p(\lambda_1) + \beta}.
\]
\end{theorem}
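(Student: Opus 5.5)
Let $C$ be a maximum clique of $G^k$, of size $c=\omega_k(G)$, and let $\1_C$ be its characteristic vector. We compare two estimates of the quadratic form $\1_C^\top p(A)\1_C$, where $A=A(G)$.

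\textbf{Lower bound from domination.} Since $C$ is a clique in $G^k$, we have $\1_C^\top(A(G^k)+I)\1_C = c^2$. The hypothesis $A(G^k)\le p(A)$ is only off-diagonal information. Its diagonal part says $p(A)_{uu}\ge 0=A(G^k)_{uu}$, which holds automatically, so we get
\[ \1_C^\top p(A)\1_C \;\ge\; \1_C^\top A(G^k)\1_C = c^2-c. \]
This is the step where the ``$1+$'' in the numerator enters.

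\textbf{Upper bound by spectral splitting.} Because $G$ is connected and $d$-regular, $\1$ is an eigenvector of $A$ for $\lambda_1=d$, and the remaining eigenvectors are orthogonal to $\1$. Write $\1_C=\frac{c}{n}\1+\z$ with $\z\perp\1$, so that $\|\z\|^2=c-\frac{c^2}{n}$. Expanding $\z$ in an orthonormal eigenbasis gives
\[ \1_C^\top p(A)\1_C = p(\lambda_1)\frac{c^2}{n} + \z^\top p(A)\z \le p(\lambda_1)\frac{c^2}{n}+\beta\Bigl(c-\frac{c^2}{n}\Bigr). \]
This is the one-sided expander-mixing step. Only the maximum $\beta$ of $p$ over the nonprincipal eigenvalues is needed, not the maximum of $|p|$. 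The bound remains valid when $\beta<0$, since $\|\z\|^2\ge0$.

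\textbf{Combining.} From the two estimates,
\[ c^2-c\le p(\lambda_1)\frac{c^2}{n}+\beta c-\beta\frac{c^2}{n}. \]
Divide by $c>0$ and rearrange:
\[ c\Bigl(1-\frac{p(\lambda_1)}{n}+\frac{\beta}{n}\Bigr)\le 1+\beta. \]
Under the hypothesis $n-p(\lambda_1)+\beta>0$, the coefficient of $c$ is positive. Dividing gives $c/n\le(1+\beta)/(n-p(\lambda_1)+\beta)$, which is the claimed bound.

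There is no serious obstacle here. The points that need care are keeping the diagonal bookkeeping (the $-c$ term) correct, and remembering that $\beta$ may be negative, which the splitting argument tolerates. The assumptions $t\ge k$ and $n\ge4$ are not needed for the inequality itself. They only matter so that a dominating $p$ can exist (one needs $\deg p\ge k$ to reach distance-$k$ pairs).
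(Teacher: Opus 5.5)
Your proof is correct and follows the paper's argument essentially line by line. You decompose $\mathbf{1}_C$ along $\mathbf{1}$ and its orthogonal complement to get $\mathbf{1}_C^\top p(A)\mathbf{1}_C\le p(\lambda_1)c^2/n+\beta(c-c^2/n)$, compare this with the lower bound $c(c-1)$ from domination, and solve for $c$. One small correction: $p(A)_{uu}\ge 0$ is not automatic, since replacing $p$ by $p-M$ for a large constant $M$ keeps the off-diagonal domination but makes the diagonal negative. It is, however, exactly the diagonal part of the entrywise hypothesis $A(G^k)\le p(A(G))$, and that is all your displayed lower bound uses.
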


\begin{proof}
Let $\{\mathbf{x}_i \}$ be an orthonormal set of eigenvectors of $A(G)$ where $\mathbf{x}_i$ corresponds to $\lambda_i$. Consider a maximum clique $C$ in $G^k$. Let $\mathbf{1}_C \in \mathbb{R}^{V(G)}$ denote the indicator vector for $C$. Write 
    \[
    \mathbf{1}_C = \sum_{i=1}^n \alpha_i \mathbf{x}_i \quad \text{where}\quad  \alpha_i = \langle \mathbf{1}_C, \mathbf{x}_i \rangle.
    \]
Since $G$ is regular, we have that $\alpha_1 = \frac{|C|}{\sqrt{n}}$ and so
    \[
    \sum_{i=2}^n\alpha_i^2 = |C| - \frac{1}{n}|C|^2.
    \]    
Now, observe that 
    \[
    \mathbf{1}_C^T\ p(A(G))\ \mathbf{1}_C = \sum_{i=1}^n \alpha_i^2 p(\lambda_i) \le \frac{1}{n}|C|^2p(\lambda_1) + \left(|C| - \frac{1}{n}|C|^2 \right)\beta.
    \]
Also
    \[
    \mathbf{1}_C^T \ p(A(G))\ \mathbf{1}_C \geq \mathbf{1}_C^T\ A(G^k)\ \mathbf{1}_C \ge |C|(|C|-1).
    \] 
Combining the two inequalities gives
\[ |C|^2\left(1 - \frac{p(\lambda_1) -  \beta}{n}\right) - |C|\left(1 + \beta \right)\le 0.\]
Solving for $|C|$, we get 
 \[ \frac{|C|}{n} \le \frac{1 + \beta}{n - p(\lambda_1) +  \beta}.\]
 This completes the proof.
\end{proof}

One naturally wonders what the optimal choice of a polynomial is in Theorem \ref{thm:EML_approach}. One immediate choice for $t\ge k$ is $p(x) = x^t + x^{t-1}$. Indeed, if $u,v$ are distinct vertices with $\ell=\dist(u,v)\le t$, then the shortest $u-v$ path in $G$ can be extended to a walk of length either $t$ or $t-1$ depending on the parity of $\ell$, simply by retracing the path one edge back and forth. Thus, \[A(G^k)_{uv}\le 1 \le  \left(A(G)^t + A(G)^{t-1}\right)_{uv},\]
which shows that $A(G^k)\le p(A(G))$. 

A better choice comes from non-backtracking polynomials. For a $d$-regular graph $G$, define 
\[ F_0(x) := 1, \qquad F_1(x):=x, \qquad F_2(x) := x^2-d, \]
and for $j\ge 3$ define 
\[ F_j(x) := xF_{j-1}(x)-(d-1)F_{j-2}(x).\]
It is well-known that $F_j(A(G))_{uv}$ is exactly the number of non-backtracking walks of length $j$ from $u$ to $v$ in $G$, see  e.g. \cite{Alon_Benjamini_Lubetzky_Sodin_2007}. For $t\ge k$, let
\begin{equation}\label{eq:non_backtracking}
   p_t(x):=\sum_{j=1}^t F_j(x). 
\end{equation}
It is clear that $A(G^k)\le p_t(A(G))$ and so Theorem \ref{thm:EML_approach} can be applied with polynomial $p_t(x)$. 

We now mention some families that are tight for Theorem \ref{thm:EML_approach}. 

\textbf{Cycles:} Let $G = C_{2k+2}$. Since $g(G)=2k+2>2k$, it is clear that $p_k(A(G)) = A(G^k)$ where $p_k$ is as defined in \eqref{eq:non_backtracking}. Moreover, $G^k$ is the graph obtained from $K_{2k+2}$ by deleting a perfect matching. Hence, $\omega_k(G) = k+1$. The spectrum of $G^k$ is $\{2k^{(1)}, 0^{(k+1)}, (-2)^{(k)}.\}$ So for the polynomial $p_k$, we have $p_k(2)=2k$, $\beta = 0$ and the upper bound from Theorem \ref{thm:EML_approach} is 
\[ \omega_k(G)\le \frac{n}{n-2k} = \frac{2k+2}{2} = k+1.\]

\textbf{Incidence graphs of projective planes:} Let $G$ be the incidence graph of a projective plane of order $q$. Then it is known that $G$ is $(q+1)$-regular bipartite graph of order $2(q^2+q+1)$ with girth $6$, diameter $3$ and spectrum $\{\pm  (q+1)^{(1)}, \pm \sqrt{q}^{(q^2+q)}\}$. For $k=2$, \[p_2(x) = x^2 + x - (q+1)\] 
by \eqref{eq:non_backtracking}. Since $g(G)>4$, we have $A(G^2) = p_2(A(G))$. It is easy to check that $p_2(q+1) = (q+1)^2$ and $\beta = p_2(-q-1)= q^2-1$. Thus, by Theorem \ref{thm:EML_approach}, we have 
\[ \omega_2(G)\le 2(q^2+q+1)\frac{1+(q^2-1)}{2(q^2+q+1) - (q+1)^2 + (q^2-1)}= q^2+q+1.\]
Conversely, since $G$ is bipartite and has diameter $3$, the bipartition classes form a clique in $G^2$ and so $\omega_2(G)\ge q^2+q+1$. Therefore, the inequality in Theorem \ref{thm:EML_approach} is sharp for $G$. 

Next, we give a tight example which uses a polynomial different from $p_t$ defined in \eqref{eq:non_backtracking}.

\textbf{Line graphs of subdivisions of complete graphs:} Let $S(K_r)$ denote the graph obtained from $K_r$ by subdividing all of its edges exactly once. Define $H_r:= L(S(K_r))$. Then $H_r$ is an $(r-1)$-regular graph of order $r(r-1)$. We claim the following.

\begin{claim}
For $r\ge 4$ and $k=2$, $\omega_2(H_r) = r$ and Theorem \ref{thm:EML_approach} is sharp for $H_r$ with polynomial 
\[ f_r(x) := \frac{x(x+2)(x-r+2)(2x-r+1)}{r+1}.\]
\end{claim}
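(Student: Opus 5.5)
The plan is to prove the two inequalities $\omega_2(H_r)\ge r$ and $\omega_2(H_r)\le r$, the upper one coming from Theorem~\ref{thm:EML_approach} with $p=f_r$. Sharpness then means exactly that this bound evaluates to $r$.

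\emph{Structure of $H_r$.} A vertex of $H_r$ is a pair $(v,e)$, where $e$ is an edge of $K_r$ and $v$ is an endpoint of $e$. Two pairs are adjacent in $H_r$ exactly when they share $v$ or share $e$. Hence $H_r$ is the disjoint union of $r$ cliques $K_{r-1}$ (one for each $v$), together with a perfect matching joining $(u,uv)$ to $(v,uv)$. Write $A=K+M$, where $K$ is the block-clique matrix and $M$ is the matching matrix. Then $K^2=(r-3)K+(r-2)I$ and $M^2=I$.

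\emph{Lower bound.} Take the $r-1$ vertices $(v,e)$ of one block $v$, together with the matched partner $(u,uv)$ of one of them. This partner is at distance $1$ from $(v,uv)$ and at distance $2$ from the rest of the block. So this is a distance-$2$ clique of size $r$, and $\omega_2(H_r)\ge r$.

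\emph{Spectrum.} I will compute it from $A(L(S))=R^\top R-2I$. The nonzero part of the spectrum of $R^\top R$ is given by the signless Laplacian of $S=S(K_r)$. Since $S$ is bipartite, this coincides with its Laplacian spectrum. Write an eigenvector in the bipartite form $(a,b)$. The eigen-equations reduce to
\[(r-1-q)(2-q)a=\bigl((r-1)I+A(K_r)\bigr)a.\]
For $\theta=r-1$ this gives $q\in\{0,r+1\}$. For $\theta=-1$ it gives $q\in\{1,r\}$. The kernel of the incidence map on the subdivision side gives $q=2$ with multiplicity $\binom r2-r$. The remaining $r(r-1)-\binom r2-r=r(r-3)/2$ eigenvalues of $A(L(S))$ equal $-2$. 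Altogether the spectrum of $H_r$ is
\[\{(r-1)^{(1)},\,(r-2)^{(r-1)},\,(-1)^{(r-1)},\,0^{(r(r-3)/2)},\,(-2)^{(r(r-3)/2)}\}.\]
Now $f_r$ vanishes at $0,-2,r-2$. Also $f_r(-1)=-(r-1)$ and $f_r(r-1)=(r-1)^2$, so $\beta=0$. Theorem~\ref{thm:EML_approach} then gives
\[\omega_2(H_r)\le \frac{n}{n-(r-1)^2}=\frac{r(r-1)}{r-1}=r,\]
since $n-p(\lambda_1)+\beta=r-1>0$.

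\emph{Domination $A(H_r^2)\le f_r(A)$.} This is the main obstacle, and it must include the diagonal and all entries at distance $\ge 3$. I will expand $f_r(A)$ in the algebra generated by $K$ and $M$, reducing with $K^2=(r-3)K+(r-2)I$ and $M^2=I$. Words such as $KMK$ and $MKM$ survive this reduction, so I will evaluate their entries combinatorially. For example, $(KMK)_{xy}$ is $1$ exactly when $x$ and $y$ lie in blocks $u\ne v$, and $x\ne(u,uv)$, $y\ne(v,uv)$. I will then classify ordered pairs $(x,y)$ by type: equal; same block; matched; at distance $2$ (a matched pair followed by a block step); and farther apart. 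For each type I will check that the entry of $f_r(A)$ is $\ge 1$ when $1\le\operatorname{dist}(x,y)\le 2$ and $\ge 0$ otherwise. The normalising factor $1/(r+1)$ and the hypothesis $r\ge4$ should make these entries nonnegative integers or rationals of the correct sign. If this direct check becomes messy, my fallback is a weaker hypothesis: the proof of Theorem~\ref{thm:EML_approach} only uses $\mathbf{1}_C^\top f_r(A)\mathbf{1}_C\ge |C|(|C|-1)$ for distance-$2$ cliques $C$. This can be verified directly, because such a clique is either contained in one block plus matched partners, or is a small set straddling blocks, and the row sums of $f_r(A)$ over these sets can be computed.
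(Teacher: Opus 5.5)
Your lower bound, your computation of the spectrum via $Q(S(K_r))$, and the evaluation $f_r(r-1)=(r-1)^2$, $\beta=0$, bound $=r$ are correct. There is one harmless slip. The eigenvalue $q=0$ of $Q(S(K_r))$ does not contribute to the nonzero spectrum of $R^\top R$. Hence $-2$ has multiplicity $\binom r2-r+1=(r-1)(r-2)/2$, not $r(r-3)/2$; your list has only $r(r-1)-1$ entries. This does not affect $\beta$.

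The genuine gap is the domination $A(H_r^2)\le f_r(A(H_r))$. This is the actual content of the claim, and you only announce a case check. You never compute $f_r(A)$, and you are unsure even of the signs of its entries; in fact neither $r\ge 4$ nor the factor $1/(r+1)$ plays any special role there. Your fallback does not repair this. The claim is precisely that the entrywise hypothesis of Theorem~\ref{thm:EML_approach} holds for $p=f_r$, and re-running its proof under a weaker hypothesis proves a different statement.

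The missing ingredient is an explicit closed form for $f_r(A)$. Doing the computation gives
\[
 f_r(A)=J-I-MKM .
\]
In reduced words this is $KMK+KM+MK+K+M-MKM$, using $(K+I)M(K+I)=J-(K+I)$ to eliminate length-four words. So $f_r(A)$ is a $0/1$ matrix. Its $(x,y)$ entry is $0$ exactly when $x=y$, or when $x=(a,ab)$ and $y=(c,cb)$ are distinct half-edges with the same far endpoint $b$. Such a pair is at distance exactly $3$ in $H_r$ (via $(b,ab)$ and $(b,cb)$). Hence every entry at distance $1$ or $2$ equals $1$, all other entries are nonnegative, and domination follows at once.

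The paper obtains the same identity in the form $f_r(A)\e_{ij}=\1-\vvec_j$. It applies the factors of $f_r$ successively to $\e_{ij}$, using $(A+I)\e_{ij}=\uvec_i+\e_{ji}$, $A\uvec_i=(r-2)\uvec_i+\vvec_i$, $A\vvec_i=\1-\vvec_i$ and $A\1=(r-1)\1$. That vector bookkeeping is much shorter than a case analysis of alternating words in $K$ and $M$. Your noncommutative-algebra route is viable, but the upper bound is not established until this computation is actually carried out.
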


\begin{proof}
Identify the vertices of $H_r$ with ordered pairs $(i,j)$, where $i\ne j$ and $1\le i,j\le r$. Then adjacency in $H_r$ is given by:
\[ (i,j)\sim (j,i)\quad \text{and}\quad (i,j)\sim (i,a), \quad a\notin \{i,j\}.\]
We will first argue that $A(H_r^2)\le f_r(A(H_r))$. Let $\e_{ij}$ be the indicator vector for vertex $(i,j)$, and define the row and column vectors to be 
\[ \uvec_i:= \sum_{a\ne i}\e_{ia}, \qquad \vvec_i := \sum_{a\ne i}\e_{ai}.\]
The adjacency relation in $H_r$ gives
\begin{equation}
    (A(H_r)+I)\e_{ij} = \uvec_i + \e_{ji},
\end{equation}
and 
\begin{equation}
    A(H_r)\uvec_i = (r-2)\uvec_i + \vvec_i, \quad A(H_r)\vvec_i = \1 - \vvec_i, \quad A(H_r)\1 = (r-1)\1. 
\end{equation}
Using the above equations, it can be checked that
\[ f_r(A(H_r))\e_{ij} = \1 - \vvec_{j}.\]
Using the adjacency relation in $H_r$, it is clear that the neighborhood of $(i,j)$ in $H_r^2$ is exactly 
\[\{(j, i)\}\cup \{(i, a), (a, i), (j, a): a\ne i,j\}.\]
Every vertex in this neighbourhood has the second coordinate different from $j$. Therefore, 
\[A(H_r^2)\e_{ij} \le \1 - \vvec_j = f_r(A(H_r))\e_{ij}.\]
This holds for every $(i,j)$ and so $A(H_r^2)\le f_r(A(H_r))$.

Now, the spectrum of $H_r$ is given by 
\[\left\{(r-1)^{(1)}, (r-2)^{(r-1)}, 0^{(r(r-3)/2)}, (-1)^{(r-1)}, (-2)^{(r-1)(r-2)/2}\right\}.\] 
The spectrum follows directly from the signless-Laplacian spectrum of subdivision graphs together with the standard incidence-matrix relation between a graph and its line graph; see \cite{Barik_Kalita_Pati_Sahoo_2018}. Therefore, $f_r(r-1) = (r-1)^2$ and $\beta = 0$. Thus, by Theorem \ref{thm:EML_approach}, we have 
\[\omega_2(H_r) \le \frac{r(r-1)}{r(r-1) - (r-1)^2} = r.\]
Conversely, $\omega_2(H_r)\ge \Delta(H_r) + 1 = (r-1) + 1 = r$. Claim follows. 
\end{proof}

\section{Lower bounds}\label{sec:lower_bounds}

We now turn our attention to deriving spectral lower bounds for $\omega_k(G)$, and hence also for $\chi_k(G)$. The basic observation is local: if $2t\le k$, then the triangle inequality makes each ball $B_t(u)$ a clique in $G^k$. To estimate the size of such a ball spectrally, we use a polynomial $p$ of degree at most $t$. The $u$-th column of $p(A(G))$ is supported inside $B_t(u)$, so Cauchy-Schwarz relates a quadratic form to the ball size. Summing over the vertices gives the trace quotient below.

This argument applies to arbitrary graphs and real polynomials with $p(A(G))\ne 0$; neither regularity nor nonnegative polynomial coefficients are required. In particular, it does not require $A(G^k)$ to be a polynomial in $A(G)$. The choice of $p$ and of the test vector allows the bound to use different parts of the spectral information.

\begin{theorem}\label{thm:general_lower_bound}
Let $G$ be a graph and $t, k \in \mathbb{N}$ be such that $2t \leq k$. Consider $p\in \mathbb{R}_t[x]$ such that $p(A(G))\ne 0$. Then, 
\[\omega_k(G)\ \ge\ \max_{\|\z\|_\infty = 1} \ \frac{\z^\top p(A(G))^2\z}{\tr(p(A(G))^2)}.
\]
\end{theorem}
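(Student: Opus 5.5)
Write $P := p(A(G))$. Since $P$ is symmetric, $P^2 = P^\top P$, so for any $\z$ with $\|\z\|_\infty = 1$,
\[
\z^\top P^2 \z \;=\; \|P\z\|^2 .
\]
We also have $\tr(P^2) = \sum_{u} \|P\e_u\|^2$, where $P\e_u$ is the $u$-th column $\mathbf{p}_u$ of $P$. The claim to prove is therefore $\|P\z\|^2 \le \omega_k(G)\sum_u \|\mathbf{p}_u\|^2$. Since $\z$ ranges over a compact set and $P\neq 0$ makes the denominator positive, it suffices to prove this for each fixed $\z$.

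\textbf{Step 1: support of the columns.} Because $\deg p\le t$ and $(A^j)_{vu}=0$ whenever $d(u,v)>j$, the column $\mathbf{p}_u$ is supported in the ball $B_t(u)$.

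\textbf{Step 2: rows expand via Cauchy--Schwarz.} For each vertex $v$, $(P\z)_v = \sum_u P_{vu} z_u$. Only vertices $u$ with $v\in B_t(u)$, i.e.\ $u\in B_t(v)$, contribute. Using $|z_u|\le 1$ and Cauchy--Schwarz,
\[
(P\z)_v^2 \;\le\; \Bigl(\sum_{u\in B_t(v)} |P_{vu}|\Bigr)^2 \;\le\; |B_t(v)| \sum_{u\in B_t(v)} P_{vu}^2 .
\]

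\textbf{Step 3: balls are cliques in $G^k$.} For $x,y\in B_t(v)$, the triangle inequality gives $d(x,y)\le 2t\le k$. Hence $B_t(v)$ is a clique in $G^k$ and $|B_t(v)|\le\omega_k(G)$.

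\textbf{Step 4: summing over $v$.}
\[
\|P\z\|^2 \;\le\; \omega_k(G)\sum_v\sum_u P_{vu}^2 \;=\; \omega_k(G)\,\tr(P^2).
\]
Dividing by $\tr(P^2)>0$ and maximising over $\z$ gives the theorem.

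The main point to watch is Step 2. One has to apply Cauchy--Schwarz row-wise, using the support of row $v$ (symmetry of $P$ puts it inside $B_t(v)$), rather than column-wise. Applied this way, the ball-size factor attaches to a single ball that is a clique in $G^k$, and no regularity is needed.
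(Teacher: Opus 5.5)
Your proof is correct and is essentially the paper's argument. The paper also uses symmetry of $M=p(A(G))$ to write $\e_u^\top M\z=\z_{B_t(u)}^\top M\e_u$, applies Cauchy--Schwarz with $\|\z_{B_t(u)}\|_2^2\le|B_t(u)|\le\omega_k(G)$, and sums over $u$ to get $\tr(M^2)$. The differences are cosmetic: you use $|z_u|\le 1$ before Cauchy--Schwarz rather than after, and since $P$ is symmetric, row $v$ and column $v$ coincide, so your row-wise versus column-wise caveat does not matter.
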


\begin{proof} Let $M = p(A(G))$. Since $\deg(p(x))\le t$, we have $M_{uv}=0$ whenever $\dist(u,v)>t$. For any vertex $u\in V(G)$, let $\e_u\in \mathbb{R}^{V(G)}$ denote the vector with $1$ in the coordinate corresponding to $u$ and $0$ elsewhere. Consider a vector $\z\in \mathbb{R}^{V(G)}$ with $\|\z\|_\infty = 1$. Since $M$ is symmetric,
\[
        \e_u^\top M\z = \z^\top M\e_u = \z_{B_t(u)}^\top M\e_u,
\]
where $\z_{B_t(u)} = \z$ on $B_t(u)$ and $0$ elsewhere. By Cauchy--Schwarz inequality, 
\[
        (\z^\top M\e_u)^2 = (\z_{B_t(u)}^\top M\e_u)^2
        \le \|\z_{B_t(u)}\|_2^2\,\|M\e_u\|_2^2
        \le |B_t(u)|\,\|M\e_u\|_2^2 .
\]
Since $2t\le k$, $B_t(u)$ forms a clique in $G^k$. So using $|B_t(u)|\le \omega_k(G)$, we obtain
\[
        (\z^\top M\e_u)^2
        \le
        \omega_k(G)\,\|M\e_u\|_2^2 .
\]
Summing the above inequality over all $u\in V(G)$ gives
\[
        \z^\top M^2\z
        \le
        \omega_k(G)\sum_{u\in V(G)} \|M\e_u\|_2^2 .
\]
Finally,
\[
        \sum_{u\in V(G)} \|M\e_u\|_2^2
        =
        \tr(M^2).
\]
Therefore
\[
        \omega_k(G)
        \ge
        \frac{\z^\top M^2\z}
             {\tr(M^2)}.
\]
Since $\z$ was arbitrary, the assertion follows. 
\end{proof}

Taking $p(x)=x^t$ expresses the denominator in Theorem~\ref{thm:general_lower_bound} as the even spectral moment $\sum_i\lambda_i^{2t}$. Two test vectors give particularly simple consequences. The all-ones vector yields a bound involving the average degree, while the Perron vector gives a bound involving the spectral radius and the size of its largest coordinate. The latter retains information about how concentrated the Perron vector is. We note here that results similar in flavour can be found in the work of Fiol, see \cite{Fiol_2002, Fiol_2013}.

\begin{corollary}
Let $G$ be a non-empty connected graph and let $t, k \in \mathbb{N}$ be such that $2t \leq k$.
\begin{enumerate}[$(i)$]
    \item Let $\av d$ denote the average degree of $G$. Then 
    \[ \omega_k(G) \geq \frac{n}{\sum_{i = 1}^n \left(\frac{\lambda_i}{\av d}\right)^{2t}}.\]
    \item Let $\mathbf{x}$ denote the unit Perron-eigenvector of $G$ and $x_{\max}$ denote its maximum entry. Then
       \[\omega_k(G) \ge \frac{1}{x_{\max}^{2}\left(1 + \sum_{i=2}^n\left(\frac{\lambda_i}{\lambda_1}\right)^{2t}\right)}.\]
\end{enumerate}
\end{corollary}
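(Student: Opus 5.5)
Both parts follow from Theorem~\ref{thm:general_lower_bound} with $p(x)=x^t$, so that $M=A(G)^t$ and $\tr(M^2)=\sum_{i=1}^n\lambda_i^{2t}$. The only remaining work is to choose a test vector $\z$ with $\|\z\|_\infty=1$ and bound $\z^\top A^{2t}\z$ from below. Because $G$ is non-empty, $\lambda_1>0$, so $A^t\neq 0$ and the theorem applies.

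For part $(i)$, take $\z=\1$. Then $\1^\top A^{2t}\1$ counts walks of length $2t$ in $G$. By the Rayleigh principle for the symmetric matrix $A$, together with the convexity inequality $\frac{\1^\top A^{2t}\1}{n}\ge\left(\frac{\1^\top A\1}{n}\right)^{2t}$, we get $\1^\top A^{2t}\1\ge n\,\av d^{\,2t}$. To justify the convexity step, expand $\1=\sum_i c_i\x_i$ in an orthonormal eigenbasis. The weights $w_i=c_i^2/n$ sum to $1$, and Jensen's inequality for the convex function $s\mapsto s^{2t}$ gives $\sum_i w_i\lambda_i^{2t}\ge\left(\sum_i w_i\lambda_i\right)^{2t}=\av d^{\,2t}$. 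This is the main technical point, and it is standard: the odd power $2t$ is not an issue because $2t$ is even, so $s^{2t}$ is convex on all of $\mathbb{R}$. Dividing gives
\[\omega_k(G)\ge \frac{n\,\av d^{\,2t}}{\sum_i\lambda_i^{2t}}=\frac{n}{\sum_{i}(\lambda_i/\av d)^{2t}}.\]

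For part $(ii)$, take $\z=\x/x_{\max}$. Since $G$ is connected, the Perron vector is positive, so $\|\z\|_\infty=1$. Then $\z^\top A^{2t}\z=\lambda_1^{2t}\|\x\|^2/x_{\max}^2=\lambda_1^{2t}/x_{\max}^2$. Dividing by $\sum_i\lambda_i^{2t}=\lambda_1^{2t}\bigl(1+\sum_{i\ge2}(\lambda_i/\lambda_1)^{2t}\bigr)$ gives the stated bound. No real obstacle is expected here beyond noting that $x_{\max}>0$.
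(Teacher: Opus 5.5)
Your proposal is correct and follows essentially the same route as the paper: take $p(x)=x^t$ in Theorem~\ref{thm:general_lower_bound}, use $\z=\1$ with Jensen's inequality applied to the eigen-decomposition of $\1$ for $(i)$, and use $\z=\x/x_{\max}$ for $(ii)$. Your explicit check that $A(G)^t\neq 0$ because $G$ is non-empty is a small detail the paper leaves implicit.
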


\begin{proof} To prove assertion $(i)$, take $z = \mathbf{1}$ and $p(A(G)) = A(G)^t$. Clearly, 
\[ \tr(A(G)^{2t}) = \sum_{i=1}^n \lambda_i^{2t}.\]
Moreover, 
\[ \| A(G)^t \mathbf{1}\|_2^2 = \mathbf{1}^\top A(G)^{2t}\mathbf{1}.\]
Suppose $\{\mathbf{x}_1, \ldots, \mathbf{x}_n\}$ is an orthonormal set of eigenvectors of $A(G)$ where $\mathbf{x}_i$ corresponds to $\lambda_i$. Write 
\[\frac{1}{\sqrt{n}}\mathbf{1} = \sum_{i=1}^n \alpha_i \mathbf{x}_i, \quad \sum_{i=1}^n \alpha_i^2 = 1.\]
Then, using Jensen's inequality for the convex map $x \mapsto x^{2t}$, we have 
\[\frac{1}{n} \mathbf{1}^T A(G)^{2t}\mathbf{1} = \sum_{i=1}^n \alpha_i^2 \lambda_i^{2t}\ge \left(\sum_{i=1}^n \alpha_i^2 \lambda_i\right)^{2t} = \left(\frac{1}{n}\mathbf{1}^T A(G)\mathbf{1}\right)^{2t} = \av d^{2t}.\]  
Using Theorem \ref{thm:general_lower_bound}, we see that 
\[ \omega_k(G)\ge \frac{n \av d^{2t}}{\sum_{i=1}^n \lambda_i^{2t}}.\]

We now prove $(ii)$. Consider $z = \frac{\mathbf{x}}{\mathbf{x}_{\max}}$ and take $p(A(G)) = A(G)^{t}$. Then  
\[ z^Tp(A(G))^2z = \frac{\lambda_1^{2t}}{\mathbf{x}_{\max}^2}\quad \text{and}\quad  \tr(p(A(G))^2) = \sum_{i=1}^n \lambda_i^{2t}.\]
The desired inequality holds by Theorem \ref{thm:general_lower_bound}. 
\end{proof}

For regular graphs, the following lower bound for $\omega_k$ is immediate.
 
\begin{corollary}\label{cor:regular_lb}
    Let $G$ be a connected $d$-regular graph and let $t, k \in \mathbb{N}$ be such that $2t \leq k$. Then 
        \[ \omega_k(G) \geq \frac{n}{\sum_{i = 1}^n \left(\frac{\lambda_i}{d}\right)^{2t}}.\]
\end{corollary}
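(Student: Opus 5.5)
This is a direct specialisation of part $(i)$ of the preceding corollary. The plan is simply to rerun the proof of Theorem~\ref{thm:general_lower_bound} with the choices $p(x)=x^t$ and $\z=\1$, using regularity in place of Jensen's inequality.

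\textbf{Step 1 (denominator).} With $M=A(G)^t$ we have $M\neq 0$ as long as $G$ has an edge. This holds since $G$ is connected and $d$-regular; the trivial case $d=0$ with $n=1$ can be checked directly. Then
\[
\tr(M^2)=\tr(A(G)^{2t})=\sum_{i=1}^n\lambda_i^{2t}.
\]

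\textbf{Step 2 (numerator).} Since $G$ is $d$-regular, $A(G)\1=d\1$, so $A(G)^{2t}\1=d^{2t}\1$ and
\[
\1^\top M^2\1=nd^{2t}.
\]
This is exact, so Jensen's inequality is not needed here.

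\textbf{Step 3 (conclude).} Because $\|\1\|_\infty=1$, Theorem~\ref{thm:general_lower_bound} gives
\[
\omega_k(G)\ge \frac{nd^{2t}}{\sum_i\lambda_i^{2t}}.
\]
Dividing numerator and denominator by $d^{2t}$ yields the stated form. Equivalently, this is corollary part $(i)$ with average degree equal to $d$.

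There is no real obstacle. The only point to check is the nondegeneracy $p(A(G))\neq 0$, which connectivity and regularity ensure once $d\ge 1$.
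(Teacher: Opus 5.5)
Your proposal is correct and follows the paper's own route: the paper treats this corollary as immediate from part $(i)$ of the preceding corollary, which is Theorem~\ref{thm:general_lower_bound} with $p(x)=x^t$ and $\z=\1$; for a $d$-regular graph, $\av d=d$ and the Jensen step there becomes exactly your identity $\1^\top A(G)^{2t}\1=nd^{2t}$. One small remark: when $d=0$ the right-hand side divides by $d$ and is undefined, so that case is excluded rather than ``checked directly''; for $d\ge 1$ your nondegeneracy argument $A(G)^t\neq 0$ is fine.
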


Suppose $G$ is a dense $d$-regular graph whose $k$-th power is complete and $n\left(\frac{\lambda}{d}\right)^{2t} = o(1)$ where $\lambda = \max_{2\le i\le n} |\lambda_i|$. Then, by Corollary \ref{cor:regular_lb}, we have 
\[\omega_k(G)\ge \frac{n}{1 + o(1)}, \]
i.e., the lower bound is asymptotically sharp. There are many such regular graph families. For instance, complete multipartite graph $K_{\underbrace{a, \ldots, a}_{s}}$ with $a$ fixed and $s\to \infty$ has 
\[n = as, \quad d = a(s-1), \quad \lambda = a, \quad \text{and}\quad n\left(\frac{\lambda}{d}\right)^{2t} = \frac{as}{(s-1)^{2t}}\to 0\quad \text{as} \quad s\to \infty.\]  

\section{Spectral variant of Wegner's Conjecture}\label{sec:Wegner}


Perhaps the most well-known, quite broad and widely open conjecture concerning graph powers is Wegner's Conjecture, posed in 1977. For integers $k\ge 1$ and $D\ge 1$, define 
\[ \omega_\Delta(k,D):=\max_{\Delta(G)\le D}\omega_k(G) \quad \text{and}\quad \chi_\Delta(k,D) :=\max_{\Delta(G)\le D}\chi_k(G).\]
Clearly, $\omega_\Delta(k,D)\le \chi_\Delta(k,D)$. Wegner proposed that the equality always holds. 

\begin{conjecture}[Wegner's conjecture \cite{Wegner_1977} cf. \cite{Cranston_2023}]\label{conj:Wegner} For $k\ge 2$ and $D\ge 3$, we have 
\[  \omega_\Delta(k,D) =\chi_\Delta(k,D).\]
\end{conjecture}

The case $D=1$ is trivial, and the case $D=2$ is easy; hence, the restriction $D\ge 3$. The case $k=1$ is also clear with the extremal graph being $K_{D+1}$. For $k\ge 2$ and $D\ge 3$, there are only finitely many pairs $(k, D)$ for which the conjecture is known to be true. We summarize the known values in Table \ref{table:Wegner}. 

\begin{table}[ht]
    \centering
    \begin{tabular}{c c c l l}
    \hline
  $k$ &  $D$ & $\chi_\Delta(k,D) =\omega_\Delta(k,D)$ & Lower bound for $\omega_\Delta(k,D)$ & Upper bound for $\chi_\Delta(k,D)$\\
  \hline
   2 & 3 & 10 & Hoffman--Singleton \cite{Hoffman_Singleton_1960} & Brooks' Theorem\\ 
   2 & 4 & 15 & Elspas \cite{Elspas_1964} & Cranston--Rabern \cite{Cranston_Rabern_2016} \\
   2 & 5 & 24 & Elspas \cite{Elspas_1964} & Cranston--Rabern \cite{Cranston_Rabern_2016} \\
   2 & 7 & 50 & Hoffman--Singleton \cite{Hoffman_Singleton_1960} & Brooks' Theorem\\
   3 & 3 & 20 & Feria-Pur\'{o}n et al. \cite{Puron_Miller_Guillermo_2011} & Bonamy--Bousquet \cite{Bonamy_Bousquet_2014} \\
   \hline
\end{tabular}
    \caption{Progress on Wegner's conjecture}
    \label{table:Wegner}
\end{table}

Motivated by the above conjecture, we naturally investigate its spectral variant. For an integer $k\ge  1$ and a real number $r\ge 1$, define 
\[ \omega_\lambda(k,r):= \max_{\lambda_1(G)\le r}\omega_k(G)\quad \text{and}\quad \chi_\lambda(k,r):= \max_{\lambda_1(G)\le r}\chi_k(G).\]
These are well-defined as $\Delta(G)\le \lambda_1(G)^2 \le r^2$ is bounded. Since $\lambda_1(G)\le \Delta(G)$, it is clear that 
\begin{equation}\label{eq:spectral_vs_degree}
   \omega_\Delta(k,D)\le \omega_\lambda(k,D)\quad \text{and}\quad \chi_\Delta(k,D)\le \chi_\lambda(k,D). 
\end{equation}

\begin{remark}
    Equality in \eqref{eq:spectral_vs_degree} may not hold in general. For instance, from Table \ref{table:Wegner} we have $\chi_\Delta(2,4) = 15$, whereas $\chi_\lambda(2,4) = 17$ from Theorem \ref{thm:spectral_Wegner_k_1_2}. 
\end{remark}

After preliminary computational investigations, we propose the following conjecture.

\begin{conjecture}\label{conj:spectraL_Wegner} For any integer $k\ge 1$ and real number $r\ge 1$, we have 
\[ \omega_\lambda(k,r) = \chi_\lambda(k,r).\]
\end{conjecture}

The restriction $r\ge 1$ is sensible because any non-empty graph $G$ has $\lambda_1(G)\ge 1$. In what follows, we provide evidence for this conjecture by proving it for various small values of $k$ and $r$. We first show that Conjecture \ref{conj:spectraL_Wegner} holds for $k=1,2$. As Conjecture \ref{conj:Wegner} remains open for $k=2$, this suggests that the two conjectures are quite different in flavour. Also, it is not difficult to see that for Conjecture \ref{conj:Wegner} there always exists an extremal $D$-regular graph $G$ with $\chi_k(G) = \chi_\Delta(k, D)$ for any $(k, D)$. The extremal graph for the Conjecture \ref{conj:spectraL_Wegner} may not always be regular.   

\begin{theorem}\label{thm:spectral_Wegner_k_1_2} For any $r\ge 1$, the following hold:
\begin{enumerate}[$(i)$]
    \item $\omega_\lambda(1,r) = \chi_\lambda(1,r) = 1 + \lfloor r\rfloor$.
    \item $\omega_\lambda(2,r) = \chi_\lambda(2,r) = 1 + \lfloor r^2\rfloor $.
\end{enumerate}
\end{theorem}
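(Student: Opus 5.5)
For each part I will show the matching upper bound on $\chi_\lambda$ and the matching lower bound on $\omega_\lambda$. Since $\omega_\lambda\le\chi_\lambda$ always, the two bounds together give the stated equalities.

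\textbf{Part $(i)$.} For the upper bound, take $G$ with $\lambda_1(G)\le r$ and apply Theorem~\ref{thm:Wilf} to each component. This gives $\chi(G)\le 1+\lambda_1(G)$. Since $\chi$ is an integer, $\chi(G)\le 1+\lfloor r\rfloor$. For the lower bound, the complete graph $K_{1+\lfloor r\rfloor}$ has $\lambda_1=\lfloor r\rfloor\le r$ and clique number $1+\lfloor r\rfloor$. Hence $\omega_\lambda(1,r)\ge 1+\lfloor r\rfloor$.

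\textbf{Part $(ii)$, upper bound.} Let $G$ have $\lambda_1(G)\le r$. I may assume $G$ is connected, since both $\chi_2$ and $\lambda_1$ are determined componentwise by the worst component. If $n\ge 3$, Theorem~\ref{thm:wilf_type_bound_graph_powers}$(i)$ gives $\chi_2(G)\le 1+\lambda_1^2\le 1+r^2$. If $n\le 2$, then $\chi_2(G)\le 2\le 1+\lfloor r^2\rfloor$, because $r\ge 1$. Integrality then yields $\chi_2(G)\le 1+\lfloor r^2\rfloor$.

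\textbf{Part $(ii)$, lower bound.} Set $m=\lfloor r^2\rfloor$. The star $K_{1,m}$ has $\lambda_1=\sqrt m\le r$. Its square is $K_{m+1}$, since all leaves are within distance $2$ of each other. Therefore $\omega_2(K_{1,m})=m+1$, and $\omega_\lambda(2,r)\ge 1+\lfloor r^2\rfloor$. This choice of extremal graph is also what makes the spectral version differ from Wegner's setting: the star has maximum degree $r^2$ but spectral radius only $r$. This explains the remark that $\chi_\lambda(2,4)=17>15$.

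The argument is essentially routine once Wilf's and Miao--Fan's bounds are available. The main point requiring care is choosing extremal graphs that meet $\lambda_1\le r$ exactly while attaining the floor values; $K_{\lfloor r\rfloor+1}$ and $K_{1,\lfloor r^2\rfloor}$ do this. A secondary point is handling disconnected graphs and the small cases $n\le 2$, where Theorem~\ref{thm:wilf_type_bound_graph_powers} does not apply directly.
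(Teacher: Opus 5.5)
Your proposal is correct and follows the paper's proof. The upper bounds come from Wilf's inequality, combined for $k=2$ with the Miao--Fan estimate $\lambda_1(G^2)\le\lambda_1^2$, which you use in its packaged form Theorem~\ref{thm:wilf_type_bound_graph_powers}$(i)$; the lower bounds come from $K_{\lfloor r\rfloor+1}$ and $K_{1,\lfloor r^2\rfloor}$. Your explicit reduction to connected graphs and your separate treatment of the case $n\le 2$ are a small piece of extra care that the paper leaves implicit.
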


\begin{proof} Let $k=1$. Wilf's inequality gives $\chi_1(G)\le 1 + \lambda_1(G) \le 1 +r$, and so $\chi_\lambda(1,r)\le 1 + \lfloor r\rfloor$. Moreover, the graph $K_{\lfloor r\rfloor +1}$ has spectral radius $\lfloor r\rfloor$ and so $\lfloor r\rfloor +1 \le \omega_\lambda(1,r)$.
Thus, $(i)$ holds. 

Assume $k=2$. Using Wilf's inequality and Lemma \ref{lemma:spectral_radius_powers} $(i)$, we see that 
\[ \chi(G^2)\le 1 + \lambda_1(G^2)\le 1 + \lambda_1^2(G) \le 1 + r^2,\]
and so $\chi_\lambda(2,r)\le 1 + \lfloor r^2\rfloor$. Moreover, if $G =K_{1,\lfloor r^2\rfloor}$, then $\lambda_1(G)\le r$ and $G^2 = K_{\lfloor r^2\rfloor+1}$. Assertion $(ii)$ follows. 
\end{proof}

Next, we prove Conjecture \ref{conj:spectraL_Wegner} when $r\le \sqrt{2+\sqrt{5}}$. The constant $\sqrt{2+\sqrt{5}}$ is quite special as Brouwer--Neumaier \cite{Brouwer_Neumaier_1989} gave a complete classification of graphs with spectral radius at most $\sqrt{2+\sqrt{5}}$, which facilitates our proof.  

\begin{theorem} Let $k\ge 1$ and $1\le r\le \sqrt{2+\sqrt{5}}$. Then $\omega_\lambda(k,r) = \chi_\lambda(k,r)$.
\end{theorem}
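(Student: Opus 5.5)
Throughout, $\lambda_1(G)\le r$ means every connected component $H$ of $G$ satisfies $\lambda_1(H)\le r$, and $\omega_k$ and $\chi_k$ of a disjoint union are the maxima over the components. So it suffices to show: for every connected $G$ with $\lambda_1(G)\le r\le\sqrt{2+\sqrt5}$ there is a graph $G'$ with $\lambda_1(G')\le r$ and $\omega_k(G')\ge\chi_k(G)$. The inequality $\omega_\lambda\le\chi_\lambda$ is trivial, so this gives equality. The plan is to use the Brouwer--Neumaier classification \cite{Brouwer_Neumaier_1989} of connected graphs with $\lambda_1\le\sqrt{2+\sqrt5}$. It says that such a graph is either a tree, namely a subgraph of a Smith graph or one of the trees $T(1,b,c)$ and the $\tilde{E}$/$\tilde{D}$-type extensions appearing there, or a cycle $C_n$ with $\lambda_1=2$. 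The only non-tree Smith graphs are cycles, and every graph with $2<\lambda_1\le\sqrt{2+\sqrt5}$ is a tree. I will quote this dichotomy, tree or cycle, rather than re-derive the list.

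\textbf{Trees.} If $G$ is a tree, I use the classical fact that every power of a tree is chordal. For $G^k$ I would justify this as follows: $G^k$ is the $k$-th power of a chordal graph, and odd powers of chordal graphs are chordal; the even case follows because the $k$-th power of a tree is an induced subgraph of the $2k$-th power of its subdivision, which is a tree, so the odd-power argument for trees covers it. Since chordal graphs are perfect, $\chi_k(G)=\omega_k(G)$, and I can take $G'=G$.

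\textbf{Cycles.} If $G=C_n$, then $r\ge 2$. For $n\le 2k+1$, $C_n^k$ is complete, so $\chi_k=\omega_k$ and again $G'=G$ works. For $n\ge 2k+2$, the power $C_n^k$ is $2k$-regular and not complete, so Brooks' bound \eqref{eq:Brooks} gives $\chi_k(C_n)\le 2k+1$. Now take $G'=C_{2k+1}$: it has $\lambda_1=2\le r$, and $C_{2k+1}^k=K_{2k+1}$, so $\omega_k(C_{2k+1})=2k+1\ge\chi_k(C_n)$. I could sharpen this with $\chi_k(C_n)=\lceil n/\lfloor n/(k+1)\rfloor\rceil$, but that is not needed.

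Combining the two cases gives $\chi_\lambda(k,r)\le\omega_\lambda(k,r)$, and hence equality. The main obstacle is stating the classification precisely enough to justify that every connected non-tree graph with $\lambda_1\le\sqrt{2+\sqrt5}$ is a cycle. This follows because a connected graph containing a cycle with an extra attached vertex or chord properly contains some $C_m$ as a subgraph, and so has $\lambda_1>2$. By the Brouwer--Neumaier list such a graph also exceeds $\sqrt{2+\sqrt5}$, since the graphs in $(2,\sqrt{2+\sqrt5}]$ are all trees. A second point to check carefully is the chordality of even powers of trees, where I rely on the subdivision argument above.
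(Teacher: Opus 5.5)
Your proposal is correct and is essentially the paper's proof. Both arguments use the Smith/Brouwer--Neumaier classification to reduce to trees and cycles. Tree powers are chordal, hence perfect. For a cycle, $\chi_k(C_n)\le 2k+1=\omega_k(C_{2k+1})$, with $\lambda_1(C_{2k+1})=2\le r$. Your explicit reduction to connected components only spells out what the paper leaves implicit.

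There is one slip, in your justification of chordality for tree powers. Since $2k$ is even, passing to $S(T)^{2k}$ never reaches the odd-power case. Use $S(T)^{2k+1}$ instead. This works because $d_{S(T)}(u,v)=2d_T(u,v)$ for original vertices, so $d_{S(T)}(u,v)\le 2k+1$ exactly when $d_T(u,v)\le k$. Hence $T^k$ is the induced subgraph of the chordal graph $S(T)^{2k+1}$ on $V(T)$. The paper simply cites this classical fact.
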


\begin{proof}  It is well-known that if $G$ is a tree, then $G^k$ is a chordal graph (cf. \cite{Agnarsson_Halldorsson_2007}), and so $\omega(G^k) = \chi(G^k)$ for all $k\ge 1$. We consider the following cases:

\textbf{Case 1:} $r < 2$. ~ Then by the classification result of Smith \cite{Smith_1970} (cf. \cite[Theorem 3.1.3]{Brouwer_Haemers_2012}), any connected graph $G$ with $\lambda_1(G)<2$ is a tree. Since any power of a tree is perfect, we conclude that $\omega_\lambda(k,r) = \chi_\lambda(k,r)$.

\textbf{Case 2:} $2\le r\le \sqrt{2+\sqrt{5}}$. ~ Let $G$ be a connected graph with $\lambda_1(G)\le r$ and $\chi_k(G) = \chi_\lambda(k,r)$. A consequence of the classification result of Brouwer--Neumaier \cite{Brouwer_Neumaier_1989} (cf. \cite{Cvetkovic_Doob_Gutman_1982}) is that $G$ is either a tree or a cycle. If $G$ is a tree, then the assertion holds. 

So suppose $G$ is a cycle. Then $\chi_\lambda(k,r) =\chi_k(G)\le 1 + \Delta(G^k) \le 1 + 2k$. Note that $C_{2k+1}^k \cong K_{2k+1}$ which implies $\omega_\lambda(k,r)\ge 2k+1$. Thus, $\omega_\lambda(k,r) = \chi_\lambda(k,r)$. 
\end{proof}

Woo--Neumaier \cite{Woo_Neumaier_2007} gave a structural classification of graphs with spectral radius at most $3/\sqrt{2}$. The next natural step is to prove Conjecture \ref{conj:spectraL_Wegner} for $r\le 3/\sqrt{2}$ and $k\ge 1$. We can prove this for small values of $k$, but proving it for all $k$ remains challenging as the classification includes graphs whose powers are difficult to analyze. We leave this problem for interested readers. 

\section{A spectral formulation of Delsarte's LP method for anticodes}\label{sec:coding} 
A distance-$k$ clique is an \emph{anticode of diameter at most $k$} in the graph metric. Thus the same extremal problem describes a clique in a graph power, or a family of words with bounded pairwise distance. In this section we show that, for distance-regular graphs, optimizing Theorem~\ref{thm:EML_approach} over the full adjacency algebra loses none of the strength of Delsarte's linear-programming (LP) bound. In fact, in this section we show that the optimized polynomial EML bound from Theorem~\ref{thm:EML_approach} and the Delsarte LP bound are equivalent.

There is substantial precedent for applying Delsarte's method to maximum-distance, rather than minimum-distance, problems. Wilson's proof of the exact $t$-intersecting Erd\H{o}s--Ko--Rado bound is an application of Delsarte LP in the Johnson scheme~\cite{Wilson_1984}; these intersecting families are precisely anticodes in the Johnson graph. Tanaka develops this viewpoint for Grassmann and bilinear forms graphs~\cite{Tanaka_2006}, and gives a unified LP treatment together with an application to twisted Grassmann graphs~\cite{Tanaka_2012}. The latter also illustrates that distance-regularity need not come from vertex-transitivity. A related, but distinct, use of anticodes is the \emph{code--anticode inequality}: Martin and Zhu use Delsarte's anticode condition to exclude nontrivial perfect codes in Grassmann and bilinear forms graphs~\cite{Martin_Zhu_1995}. That application uses the inequality between the sizes of an actual code and an actual anticode; it should not be confused with optimizing an LP upper bound on anticode size.

The relation between LP and spectral ratio bounds is itself classical. Luz proves that Delsarte's bound for a graph whose edges are a union of classes of a symmetric association scheme is the minimum of suitable ratio bounds~\cite{Luz_2007}; see also the eigenvalue approach to rank-metric codes in~\cite{Abiad2024EigenvalueCodes}. Our result specializes this principle to the polynomial expander-mixing bound for graph powers and makes the conversion explicit in the adjacency algebra of the \emph{original} graph. This transfers known anticode bounds to our setting and explains why an optimized polynomial can improve substantially on a fixed walk-counting choice. We distinguish below the hypotheses for this exact equivalence from the weaker hypotheses needed merely to apply Theorem~\ref{thm:EML_approach}.

\subsection{The anticode LP and its relation to the Polynomial EML bound}
For a connected distance-regular graph $G$ on $N$ vertices and of diameter $D$, write $A_i$ for its distance matrices and $E_j$ for the primitive idempotents of its distance scheme, with $E_0=J/N$. Define the second eigenmatrix $Q$ by
\[
 E_j=\frac1N\sum_{i=0}^{D}Q_{ij}A_i.
\]
For a nonempty anticode $S$, its inner distribution is
$a_i=|S|^{-1}\1_S^{\mathsf T}A_i\1_S$. Then $a_0=1$,
$\sum_i a_i=|S|$, and
$\sum_i a_iQ_{ij}=N|S|^{-1}\1_S^{\mathsf T}E_j\1_S\ge0$.
Consequently, the standard anticode LP is~\cite{Delsarte_1973,Tanaka_2012}
\begin{equation}\label{eq:coding-primal}
 L_k(G)=\max\left\{\sum_{i=0}^{D}a_i:
 \begin{array}{l}
 a_0=1,\quad a_i\ge0\ (0\le i\le D),\\
 a_i=0\ (i>k),\quad
 \sum_{i=0}^{D}a_iQ_{ij}\ge0\ (0\le j\le D)
 \end{array}\right\}.
\end{equation}
Unlike the minimum-distance code LP, it is the \emph{large} distances that are forbidden. In particular, $L_k(G)$ is a real-valued relaxation with $\omega_k(G)\le L_k(G)$.

\begin{proposition}\label{prop:coding-delsarte}
Let $G$ be a connected distance-regular graph on $N$ vertices, of degree $d$ and diameter $D$, and let $1\le k<D$. Write $L_k(G)$ for the Delsarte linear-programming bound on the size of a diameter-$k$ anticode in its distance scheme. Then
\begin{equation}\label{eq:coding-delsarte}
 L_k(G)=\inf_p
 \frac{N(1+\beta_p)}{N-p(d)+\beta_p},
 \qquad
 \beta_p=\max_{\lambda\ne d}p(\lambda),
\end{equation}
where the maximum is over the nonprincipal adjacency eigenvalues and the infimum is over real polynomials of degree at most $D$ satisfying
$A(G^k)\le p(A(G))$ entrywise and $N-p(d)+\beta_p>0$.
\end{proposition}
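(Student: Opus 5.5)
The plan is to prove the two inequalities $L_k(G)\le \inf_p(\cdots)$ and $L_k(G)\ge\inf_p(\cdots)$ separately. Both go through the adjacency algebra, which for a distance-regular graph is spanned by $A_0,\dots,A_D$ and equally by $E_0,\dots,E_D$. The polynomials of degree at most $D$ in $A=A(G)$ are exactly the elements $M=\sum_i m_iA_i=\sum_j \mu_jE_j$ of this algebra. Here $\mu_0=p(d)$, the $\mu_j$ ($j\ge1$) are the values of $p$ at the nonprincipal eigenvalues, and the constraint $A(G^k)\le p(A)$ reads $m_i\ge1$ for $1\le i\le k$, with $m_0,m_{k+1},\dots,m_D$ free.

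\textbf{Step 1 ($L_k\le$ EML value).} Take a feasible $p$ and a feasible LP vector $a$. I would redo the proof of Theorem~\ref{thm:EML_approach} at the level of the inner distribution rather than an actual clique. Put $s=\sum_i a_i$ and define $x_j=\sum_i a_iQ_{ij}/N\ge0$, so that $x_0=s/N$. Using $A_i=\sum_j P_{ji}E_j$ and the orthogonality relation $\sum_j Q_{ij}P_{jl}=N\delta_{il}$, one gets the identity $\sum_i a_i m_i=\sum_j x_j\mu_j$ for $M=\sum m_iA_i$, which mimics $|S|^{-1}\1_S^\top M\1_S$. The identity $\sum_j x_j=a_0=1$ comes from $\sum_j Q_{0j}P_{j0}=N$; I expect to need the normalization $\sum_j Q_{ij}=N\delta_{i0}$ here. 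The left-hand side is at least $m_0+\sum_{i=1}^k a_i\ge m_0+(s-1)$. Because $p(A)$ has diagonal entries $m_0$, I should use the diagonal-aware version of the EML estimate. The right-hand side is at most $x_0p(d)+(1-x_0)\beta_p$. The statement of Theorem~\ref{thm:EML_approach} implicitly absorbs the diagonal term $m_0$; I would check whether $m_0\ge0$ can be assumed, or replace the ``$1$'' accordingly. With $m_0=0$ the inequality becomes $s-1\le sp(d)/N+(1-s/N)\beta_p$, which rearranges to $s\le N(1+\beta_p)/(N-p(d)+\beta_p)$. Taking the supremum over $a$ gives $L_k\le$ the infimum.

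\textbf{Step 2 (reverse inequality via LP duality).} This is the main step. I would write the LP dual of \eqref{eq:coding-primal}. It has variables $y_j\ge0$ for $j\ge1$ (the $j=0$ constraint can be absorbed since $Q_{i0}=1$), and dual feasibility says that for $1\le i\le k$ the quantity $1+\sum_j y_jQ_{ij}\le0$ up to sign conventions. The optimum is $L_k=1+\sum_j y_jQ_{0j}$. Strong duality holds because the primal is feasible ($a=e_0$) and bounded (it is bounded by $N$, from the $j$-sums). From an optimal dual $y$ I would build $M=c\bigl(E_0\cdot\mathrm{const}-\sum_{j\ge1}y_jE_j\bigr)$, or more directly choose $\mu_j=\beta$ constant shifted by $-y_j$-type terms, so that $M$ has off-diagonal coefficients $m_i\ge1$ on $1\le i\le k$ and arbitrary coefficients beyond $k$. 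The natural ansatz is $M=\alpha J-\gamma\sum_{j\ge1}y_jE_j+\beta I$. Its eigenvalues are $\mu_0=\alpha N+\beta$ and $\mu_j=\beta-\gamma y_j\le\beta$, so $\beta_p=\beta$ (up to the attainment of the max), and the distance coefficients are $m_i=\alpha-\gamma\sum_j y_jQ_{ij}/N$. Choosing $\beta$ so that the diagonal coefficient vanishes, and fixing $\alpha,\gamma$ through the dual constraint, should make $N(1+\beta)/(N-\mu_0+\beta)=N/(N-\alpha N)\cdot(1+\beta)$ collapse to $1+\sum_j y_jQ_{0j}=L_k$. Since $M$ lies in the algebra, $M=p(A)$ for some $p$ of degree at most $D$.

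\textbf{Expected obstacle.} The delicate part is the bookkeeping in Step 2: matching the dual constraints exactly to $m_i\ge1$, handling $\beta_p$ as a maximum that may not be attained when some $y_j=0$ (this is harmless, since the bound only needs an upper estimate), and verifying the positivity condition $N-p(d)+\beta_p>0$. I would also reconcile the diagonal term $m_0$ with the form of Theorem~\ref{thm:EML_approach}. If equality is not achieved exactly, I would use a limiting family of polynomials, which is why the statement uses an infimum.
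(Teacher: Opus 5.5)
Your Step 1 is correct. It is a primal-side version of the paper's weak-duality step; the paper instead checks that $F=(J+\beta_pI-p(A))/\delta$ is feasible for the dual. Your hedge about $m_0$ is settled by the hypothesis itself: $A(G^k)$ has zero diagonal and domination is entrywise, so $m_0\ge 0$.

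Your Step 2 ansatz is also the paper's. Write the dual solution as $F=E_0+\sum_{j\ge1}y_jE_j$. Then the paper's $p(A)=J+(\varepsilon\tau-1)I-\varepsilon F$ is your $\alpha J+\beta I-\gamma\sum_{j\ge1}y_jE_j$ with $\alpha=1-\varepsilon/N$, $\gamma=\varepsilon$, $\beta=\varepsilon\tau-1$ and $N\tau=L_k$. As written, however, Step 2 has two genuine gaps.

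\textbf{First gap: the admissible set.} $A(G^k)\le p(A)$ forces $m_i\ge 0$ for $i>k$ (and for $i=0$); these coefficients are not free. In your ansatz, $m_i=\alpha-\frac{\gamma}{N}\sum_{j\ge1}y_jQ_{ij}$ for $i>k$. The dual constraints say nothing about the sign of $\sum_j y_jQ_{ij}$ there, so your polynomial need not dominate $A(G^k)$. The fix is to take $\alpha=1-\gamma/N$ (needed anyway for the quotient to collapse to $L_k$) and $\beta=\gamma L_k/N-1$ (zero diagonal), and then to take $\gamma$ small. This is the paper's ``sufficiently small $\varepsilon$''.

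\textbf{Second gap: positivity of the denominator and the hypothesis $k<D$.} You never prove $N-p(d)+\beta_p>0$, and you never use $k<D$. Without $k<D$ the statement is false. For $k=D$, every admissible $p$ has $p(A)=J-I+R$ with $R\ge0$ entrywise, $R$ in the algebra, constant row sum $\rho$, and eigenvalues $\rho_j$ on $E_j$. Then $N-p(d)+\beta_p=\max_{j\ge1}\rho_j-\rho\le0$, because a nonnegative matrix with constant row sum $\rho$ has all eigenvalues at most $\rho$. So the infimum is over the empty set.

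With the parameters above, $N-p(d)+\beta_p=\gamma(1-\mu)$, where $\mu=\min_{j\ge1}y_j$. For $k=D$ the dual optimum is $y\equiv1$, so this vanishes. The missing idea is that $L_k<N$ because $G^k$ is not complete; the paper exhibits the dual-feasible $F=(I+A(\overline{G^k})/h)/2$, whose diagonal is $1/2$. Since $L_k=1+\sum_{j\ge1}y_jQ_{0j}\ge 1+\mu(N-1)$, this gives $\mu<1$. The paper then shows, via $(F-\mu I)/(1-\mu)$, that an optimal solution has $\mu=0$, so $\beta_p$ is attained and equals $\beta$.

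Your alternative of tolerating $\mu>0$ also works once $\mu<1$ is known, since the quotient is then $(L_k-N\mu)/(1-\mu)\le L_k$. But the reason is not that ``the bound only needs an upper estimate''. The proposition's quotient uses the true $\beta_p$, and lowering $\beta$ at fixed $p(d)$ lowers the quotient only when $N-p(d)-1\ge 0$. The actual reason is that here $N-p(d)-1=\gamma(1-L_k/N)\ge0$.
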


\begin{proof}
Put $A=A(G)$ and let $\mathcal A$ be the real span of the distance
matrices $A_0=I,A_1=A,\ldots,A_D$. Distance-regularity gives
$\mathcal A=\{p(A):\deg p\le D\}$; every matrix in this algebra has constant diagonal and constant row sum. Let $J$ denote the all-ones matrix. The dual form of \eqref{eq:coding-primal} is
\begin{equation}\label{eq:coding-dual}
 L_k(G)=N\min\bigl\{\tau:
 F\in\mathcal A,\ F\succeq0,\ F\1=\1,\ F_{xx}=\tau,
 \ F_{xy}\le0\ \text{if }1\le\dist(x,y)\le k\bigr\}.
\end{equation}
Here $F\succeq0$ means that $F$ is positive semidefinite. To see the
bound directly, the indicator vector of an anticode $S$ satisfies
$|S|^2/N\le\1_S^T F\1_S\le\tau|S|$.

For a feasible $p$, put $\beta=\beta_p$ and
$\delta=N-p(d)+\beta$. The matrix
\[
 F=\frac{J+\beta I-p(A)}{\delta}
\]
has row sum one and nonnegative eigenvalues: its eigenvalue on $\1$ is one, and those on $\1^\perp$ are $(\beta-p(\lambda))/\delta$. Its entries at distances $1,\ldots,k$ are nonpositive, and its diagonal is at most $(1+\beta)/\delta$, since $p(A)$ has nonnegative diagonal. Thus $L_k(G)$ is at most every bound on the right of
\eqref{eq:coding-delsarte}.

Conversely, take an optimal $F$ in \eqref{eq:coding-dual}.
Since $G^k$ is not complete, its complement has positive degree $h$ and $F=(I+A(\overline{G^k})/h)/2$ is feasible with diagonal $1/2$; hence the optimal $\tau<1$.
An optimal $F$ has a zero eigenvalue on $\1^\perp$: otherwise its least
such eigenvalue $\mu$ satisfies $0<\mu<1$, since $\tau<1$, and
$(F-\mu I)/(1-\mu)$ is feasible with smaller diagonal
$(\tau-\mu)/(1-\mu)<\tau$.
For sufficiently small $\varepsilon>0$, define $p$ by
\[
 p(A)=J+(\varepsilon\tau-1)I-\varepsilon F.
\]
This matrix has zero diagonal, entries at least one at distances
$1,\ldots,k$, and nonnegative entries at all other distances. Therefore it dominates $A(G^k)$. Moreover,
\[
 \beta_p=\varepsilon\tau-1,\qquad
 p(d)=N-1+\varepsilon(\tau-1),\qquad
 \frac{N(1+\beta_p)}{N-p(d)+\beta_p}=N\tau=L_k(G),
\]
which proves the reverse desired inequality.
\end{proof}

The classical finite Delsarte LP is formulated for an association scheme; it does \emph{not} require the scheme to arise from a distance-regular graph. Prescribed allowed relations can be handled in any symmetric association scheme~\cite{Delsarte_1973,Schrijver_1979}. In Proposition~\ref{prop:coding-delsarte}, distance-regularity provides the more specific identity
\[
 \operatorname{span}\{A_0,\ldots,A_D\}=\mathbb R[A]
   =\{p(A):\deg p\le D\}.
\]
It is this identity, together with the constant diagonals and row sums in the distance algebra, that permits the \emph{full} LP to be represented by polynomials in one adjacency matrix. No $Q$-polynomial assumption, vertex-transitivity, or linearity of the anticode is used. For a general association scheme, a chosen relation graph need not generate the full Bose--Mesner algebra, so optimizing only over its adjacency polynomials need not recover the scheme's LP. Nor may we replace the degree bound $D$ by $k$ without potentially weakening the result: the theorem concerns optimization over the entire algebra, not a universal low-degree formula or a guaranteed reduction in computational complexity.

In contrast, our polynomial EML bound (Theorem~\ref{thm:EML_approach}) applies to \emph{every} regular graph for which a polynomial satisfies $A(G^k)\le p(A(G))$ and the denominator is positive. It requires neither distance-regularity, walk-regularity, nor an association scheme. Regularity \emph{is} used, through the constant eigenvector, and the theorem as stated is not an unweighted bound for arbitrary irregular graphs. Also, the domination condition is entrywise: outside a distance scheme it must be checked from the graph or justified by a walk-counting construction, and does not follow from the adjacency eigenvalues alone. The non-backtracking polynomials in \eqref{eq:non_backtracking} provide such constructions without scheme assumptions, although a particular choice can give a weaker estimate.

Let us next see an example for which one can now directly use the Polynomial EML bound (we do it for the simplest case of $k=1$).

\begin{example}
Replace one vertex of $K_{3,3}$ by a triangle, joining its three new vertices to the three distinct former neighbours. The resulting graph $H$ is cubic on eight vertices. Only the three new vertices lie in a triangle, so the diagonal of $A(H)^3$ is not constant. Since every matrix in a symmetric association-scheme algebra has constant diagonal and that algebra is closed under multiplication, $A(H)$ belongs to no such algebra on this vertex set. Nevertheless, a direct calculation gives
\[
 \det(xI-A(H))=x(x-3)(x^2+x-4)(x^2+x-1)^2.
\]
Taking $p(x)=x$, we have $\beta=(\sqrt{17}-1)/2$, and Theorem~\ref{thm:EML_approach} yields
\[
 \omega(H)\le\frac{8(1+\beta)}{8-3+\beta}
 =\sqrt{17}-1<4.
\]
The inserted triangle therefore gives $\omega(H)=3$. This example uses the Polynomial EML bound directly (instead of the distance-regular equivalence). For a sharp higher-power example beyond distance-regularity, the graphs $H_r=L(S(K_r))$ considered earlier have $r-3$ common neighbours on a within-row edge and none on an edge $(i,j)(j,i)$, so are not distance-regular when $r\ge4$; their diameter-two anticode bound is nevertheless sharp.
\end{example}

\subsection{Applications to coding theory}
The next  examples illustrate how one can now use Proposition~\ref{prop:coding-delsarte} and the spectral machinery we propose to inject alternative spectral arguments and methods to coding theory. The nested-code example uses the more general Theorem~\ref{thm:EML_approach}, while the BCH example shows that a low-degree polynomial can already be sharp.

\medskip
\textbf{Rank-metric anticodes and packing.}
Let $q$ be a prime power and let $2\le a\le b$. The bilinear forms graph $G=\mathrm{Bil}_q(a,b)$ has vertex set $\mathbb F_q^{a\times b}$, with adjacency when the difference has rank one. Its graph distance is $\operatorname{rank}(X-Y)$ and its diameter is $a$. Delsarte's LP method gives the sharp anticode bound~\cite{Delsarte_1978,Tanaka_2006,Tanaka_2012}
\begin{equation}\label{eq:coding-rank}
 \omega_k(\mathrm{Bil}_q(a,b))=L_k(\mathrm{Bil}_q(a,b))=q^{bk}
 \qquad(1\le k<a).
\end{equation}
Equality is realized by the linear space $\mathcal U$ of matrices whose last $a-k$ rows vanish. By Proposition~\ref{prop:coding-delsarte}, the same bound is attained by an adjacency polynomial of degree at most $a$, and it bounds \emph{all} rank-metric anticodes, not only linear ones. For example, $\mathrm{Bil}_2(2,3)$ has $64$ vertices and distinct eigenvalues $21,5,-3$. For $k=1$, the polynomial $p(x)=x$ already gives
\[
 \omega_1(\mathrm{Bil}_2(2,3))\le
 \frac{64(1+5)}{64-21+5}=8,
\]
with equality for $\mathcal U$.

There is also a direct packing consequence. If a matrix code $\mathcal C$ has minimum rank distance at least $k+1$, it meets each additive coset of $\mathcal U$ in at most one matrix. Hence
\[
 |\mathcal C|\le q^{ab}/|\mathcal U|=q^{b(a-k)},
\]
which is the rank-metric Singleton bound. This deduction uses the \emph{constructed} anticode and its cosets, not an upper estimate on $|\mathcal U|$ substituted into a code--anticode inequality. Optimal linear rank-metric anticodes also enter the characterization of generalized rank weights~\cite{Ravagnani_2016}. Thus the equivalence reaches both extremal matrix families and coding-theoretic parameters.

\medskip
\textbf{Nested linear codes.}
Let $C\subseteq D\subseteq\mathbb F_q^\ell$ be linear codes with
$d(C)\ge3$. The coset graph $\Gamma_C$ has the cosets of $C$ as vertices,
with two cosets adjacent when their difference contains a weight-one
word; their distance is the minimum weight in their difference coset.
Its order is $N=q^{\ell-\dim C}$ and its degree is $d=\ell(q-1)$.
A character diagonalization gives its adjacency eigenvalues
\[
 d-q\,\mathrm{wt}(u),\qquad u\in C^\perp:
\]
each zero coordinate contributes $q-1$ to the character sum, and each
nonzero coordinate contributes $-1$. Define
\[
 R_C(D)=\max_{x\in D}\min_{c\in C}\mathrm{wt}(x-c).
\]
If $R_C(D)\le k$, then $D/C$ is a clique of $\Gamma_C^k$, because
$x-y\in D$ for $x,y\in D$. Consequently,
Theorem~\ref{thm:EML_approach} gives the coding bound
\begin{equation}\label{eq:coding-nested}
 q^{\dim D-\dim C}\le
 \frac{N(1+\beta)}{N-p(d)+\beta},\qquad
 \beta=\max_{0\ne u\in C^\perp}p\bigl(d-q\,\mathrm{wt}(u)\bigr),
\end{equation}
whenever $A(\Gamma_C^k)\le p(A(\Gamma_C))$ and the denominator is positive.
Thus dual weights provide the spectral input for a bound on the size of
a linear extension with prescribed relative covering radius.

In particular, $\dim D-\dim C$ is at most the base-$q$ logarithm of the right-hand side of \eqref{eq:coding-nested}, rounded down. The hypothesis $d(C)\ge3$ ensures that the weight-one generators give a simple regular coset graph with the stated degree; no assumption that $C$ is completely regular or that $\Gamma_C$ is distance-regular is needed. Thus \eqref{eq:coding-nested} is an application of the general EML theorem even when the distance relations of $\Gamma_C$ do not form an association scheme. Only when $\Gamma_C$ is distance-regular does Proposition~\ref{prop:coding-delsarte} identify its fully optimized polynomial bound with the anticode LP in the distance scheme.

\medskip
\textbf{A sharp BCH example.}
Let $m\ge3$ be odd, put $s=2^m$, and let $C$ be the binary primitive
narrow-sense double-error-correcting BCH code of length $\ell=s-1$.
Its coset graph $\Gamma$ is distance-regular on $s^2$ vertices, of degree
$\ell$, with nonprincipal eigenvalues
$-1$ and $-1\pm\sqrt{2s}$; see~\cite{vanDam_FonderFlaass_2000}.
Since $d(C)\ge5$, there are no triangles and each pair of vertices at
distance two has exactly two common neighbours. Hence
\[
 A(\Gamma^2)=p(A(\Gamma)),\qquad
 p(x)=x+\frac{x^2-\ell}{2},\qquad
 p(\ell)=\frac{s(s-1)}2,\qquad \beta=\frac s2.
\]
Substitution in Theorem~\ref{thm:EML_approach} yields
\begin{equation}\label{eq:coding-bch}
 \omega_2(\Gamma)\le
 \frac{s^2(1+s/2)}{s^2-s(s-1)/2+s/2}=s=2^m.
\end{equation}
A radius-one ball has $\ell+1=s$ vertices and diameter at most two, so
the bound is attained. In particular, this quadratic spectral bound attains the Delsarte optimum for unrestricted diameter-two
anticodes, and \eqref{eq:coding-nested} gives
$\dim D-\dim C\le m$ whenever $R_C(D)\le2$.

Note that for this example $A(\Gamma^2)=p(A(\Gamma))$ gives the required entrywise domination, the nonprincipal eigenvalues determine $\beta$, and the radius-one ball shows that the bound is sharp. Thus the full adjacency algebra is needed for the general equivalence, but in this example a quadratic polynomial is enough. Indeed, outside distance schemes, our Polynomial EML bound still applies.

\section*{Declaration of AI use}
Most of the core ideas in this paper were developed by the authors during the Algebraic Graph Theory and Applications workshop, held in  
October 2025 at the 
Banff International Research Station, Canada. The authors acknowledge the use of ChatGPT (GPT-5.6, OpenAI; accessed September 2026) solely for further exploration and improvements. AI tools were not used to draft the manuscript. All formal statements, arguments, and proofs in the manuscript were written and checked by the authors, who take full responsibility for the accuracy and integrity of the article. 

\section*{Acknowledgements}
The authors thank the organizers of the Algebraic Graph Theory and Applications workshop held at the BIRS location in Banff, and also, in particular, the participants Seyed Alireza,  Himanshu Gupta, Frederico Cançado, Shahla Nasserasr, Shivaramakrishna Pragada, and Ralihe Villagr\'an for discussions on this topic.

Aida Abiad is supported by NWO (Dutch Research Council) through the grant VI.Vidi.213.085. This work was done in part while the Aida Abiad was visiting the Simons Institute for the Theory of Computing at UC Berkeley. Gabriel Coutinho and Emanuel Juliano are supported by FAPEMIG and CNPq grants. Michael Tait is partially supported by NSF grant DMS-2245556, a Villanova University Summer Grant, and a Villanova CLAS Research Semester. 
\bibliographystyle{plain}
\bibliography{references}

\affl{Aida Abiad}{a.abiad.monge@tue.nl}{Department of Mathematics and Computer Science, Eindhoven University of Technology, The Netherlands \\ Department of Mathematics and Data Science, Vrije Universiteit Brussel, Belgium}

\affl{Gabriel Coutinho}{gabriel@dcc.ufmg.br}{Department of Computer Science, Federal University of Minas Gerais, Brazil}

\affl{Hitesh Kumar}{hitesh.kumar.math@gmail.com, hitesh\_kumar@sfu.ca}{Department of Mathematics, Simon Fraser University, Canada}

\affl{Emanuel Juliano}{emanuelsilva@dcc.ufmg.br}{Department of Computer Science, Federal University of Minas Gerais, Brazil}

\affl{Michael Tait}{michael.tait@villanova.edu}{Department of Mathematics \& Statistics, Villanova University, U.S.A.}

\end{document}